\setlist[enumerate]{label=(\roman*)}
\newtheorem{theorem}{Theorem}[section]
\newtheorem{lemma}[theorem]{Lemma}
\newtheorem{prop}[theorem]{Proposition}
\theoremstyle{definition}
\newtheorem{example}[theorem]{Example}
\newtheorem*{remark}{Remark}
\newtheorem{conj}{Conjecture}
\numberwithin{equation}{section}
\newcommand \dm[1]  { \,\mathrm d{#1} }
\renewcommand{\epsilon}{\varepsilon}
\renewcommand{\phi}{\varphi}
\renewcommand{\leq}{\leqslant}
\renewcommand{\geq}{\geqslant}
\newcommand{\norm}[1]{\left\lVert#1\right\rVert}
\newcommand{\norml}[3]{\norm{#1}_{L^{#2}({#3})}}
\newcommand{\abs}[1]{\left\vert#1\right\vert}
\renewcommand{\hat}[1]{\widehat{#1}}
\renewcommand{\tilde}{\widetilde}
\newcommand{\e}{\mathrm{e}}
\newcommand{\tpi}{2 \pi \mathrm{i}}
\newcommand{\R}{\mathbb{R}}
\newcommand{\C}{\mathbb{C}}
\newcommand{\N}{\mathbb{N}}
\newcommand{\Z}{\mathbb{Z}}
\newcommand{\D}{\mathbb{D}}
\renewcommand{\th}{\textsuperscript{th} }
\newcommand{\qtext}[1]{\quad\text{#1}\quad}
\newcommand{\qand}{\qtext{and}}
\newcommand{\qfa}{\qtext{for all}}
\newcommand{\ie}{i.e.\ }
\title{A partial converse to the Riemann--Lebesgue lemma for Bessel--Fourier series of order zero}
\author{Ryan L.~Acosta Babb\footnote{University of Warwick, \href{ryan.l.acosta-babb@warwick.ac.uk}{ryan.l.acosta-babb@warwick.ac.uk}}}
\date{}
\begin{document}
\maketitle

\begin{abstract}
    It is known that the Bessel--Fourier coefficients $f_m$ of a function $f$ such that $\sqrt{x}f(x)$
    is integrable over $[0,1]$ satisfy $f_m/\sqrt{m}\to 0$.
    We show a partial converse, namely that for $0\leq \alpha<1/2$
    and any non-negative $a_m\to 0$, there is a function $f$
    such that $x^{\alpha+1}f(x)$ is integrable and its Bessel--Fourier coefficients $f_m$ satisfy $m^{-\alpha}f_m\geq a_m$
    and $m^{-\alpha}f_m\to 0$.
    We conjecture that the same should be true when $\alpha=\frac{1}{2}$,
    and discuss some consequences of this conjecture.
\end{abstract}

\section{Introduction}
The Riemann--Lebesgue Lemma is a classic result in Fourier analysis,
which tells us that the Fourier coefficients of an integrable function, $f$,
decay to 0 at high frequencies, \ie $\hat{f}(n)\to 0$ as $\abs{n}\to\infty$.

By increasing the regularity of $f$, say $f\in C^k$, we can obtain faster rates
of decay of the coefficients, \emph{viz.} $\abs{n}^{k}\hat{f}(n)\to 0$.
But if we merely assume that $f\in L^1$, then the Riemann--Lebesgue lemma
is the best possible result we can expect.
Indeed, we can find integrable functions whose Fourier coefficients
decay as slowly as we like, as the following theorem shows \citep[see, e.g.,][Theorem 3.3.4]{GrafakosCFA}.
\begin{theorem}\label{thm:ConRL}
    Let $a_n\geq 0$ be a sequence such that $a_n\to 0$ as $\abs{n}\to\infty$.
    Then, there is a function $f\in L^1([0,1])$
    whose Fourier coefficients satisfy $\hat{f}(n)\geq a_n$ for all $n\in\Z$.
\end{theorem}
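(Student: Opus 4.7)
The plan is the classical construction via convex sequences and the Fejér kernel. By replacing $a_n$ with $\max(a_n,a_{-n})$ I may assume $a_{-n}=a_n$, so it suffices to produce a real-valued, even $f\in L^1(\T)$ with $\hat f(n)\geq a_n$ for $n\geq 0$.

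The first real step is to dominate $(a_n)$ by a better-behaved sequence. Set $\tilde a_n:=\sup_{k\geq n} a_k$, which is nonincreasing, tends to $0$, and dominates $a_n$; then take $(b_n)_{n\geq 0}$ to be the upper convex hull of the points $\{(n,\tilde a_n):n\geq 0\}$. A short combinatorial argument---the main delicate point of the proof---shows that $(b_n)$ is nonnegative, nonincreasing, convex in the sense that $\Delta^2 b_n := b_n - 2b_{n+1} + b_{n+2}\geq 0$ for every $n$, dominates $\tilde a_n\geq a_n$, and still satisfies $b_n\to 0$ (the last point because every secant from $(N,\tilde a_N)$ to a far point must lie above $b_N$, and $\tilde a_k\to 0$).

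Given such a convex majorant, Abel summation applied twice formally rewrites the series $\frac{b_0}{2}+\sum_{n\geq 1} b_n\cos(2\pi n x)$ as a nonnegative sum of Fejér kernels,
\[
    f(x):=\sum_{N=0}^{\infty}(N+1)(\Delta^2 b_N)\,F_N(x),
\]
where $F_N$ denotes the Fejér kernel of order $N$. Since $\Delta^2 b_N\geq 0$, $F_N\geq 0$, and $\norml{F_N}{1}{\T}=1$, a telescoping estimate gives $\norml{f}{1}{\T}\leq\sum_{N\geq 0}(N+1)\Delta^2 b_N=b_0<\infty$, so $f$ is nonnegative and in $L^1(\T)$. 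Computing $\hat{F_N}(n)=(1-\abs{n}/(N+1))^+$ and interchanging sums (justified by absolute convergence) yields $\hat f(n)=b_{\abs{n}}\geq a_n$, as required.

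The main obstacle is the convex-majorant construction: one must simultaneously ensure $b_n\geq a_n$, $\Delta^2 b_n\geq 0$, and $b_n\to 0$. The telescoping bound also hides a short argument requiring $(M+1)(b_{M+1}-b_{M+2})\to 0$, which one obtains from convexity together with $b_n\to 0$; this is precisely where the care taken in the previous step pays off. Every other step is a routine manipulation of Fejér kernels.
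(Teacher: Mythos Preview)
Your proposal is correct and is precisely the classical Fej\'er-kernel/convex-majorant argument the paper has in mind: the paper does not give its own proof of this theorem but cites it as Theorem~3.3.4 in Grafakos, and it later records exactly your two auxiliary facts (the convex majorant and the telescoping identity) as \cref{lem:conv,lem:telescope} in order to run the same construction for the Bessel--Fourier analogue. So your approach matches the paper's (cited) proof and the template it uses throughout.
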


This may seem `obvious': if we have the coefficients $a_n$, can we not just set \[
    f(x) = \sum_{n=-\infty}^{\infty}a_n\e^{\tpi nx}?
\] The answer is no, since there is no guarantee that this series will converge to an $L^1$ function.
Recall that the \emph{Fej\'er kernel} is given by
    \begin{equation}\label{eqn:fejer}
        F_{N}(x) = \sum_{n=-N}^{N}\left(1-\frac{\abs{n}}{N+1}\right)\e^{\tpi nx}
            \equiv \frac{1}{N+1}\frac{\sin(\pi(N+1)x)^2}{\sin(\pi x)^2},
    \end{equation}
and consider the function \[
        f(x) := \sum_{m=0}^{\infty}\frac{1}{2^m}F_{2^{2^{2^m}}}(x).
\] Since $F_N(x)\geq 0$ and $\norml{F_N}{1}{[0,1]}=1$ for all $N$,
this function is clearly well-defined and integrable.
However, as Exercises 4.2.3 and 4.2.4 in \cite{GrafakosCFA} show,
the partial sums of the Fourier series of $f$ diverge (in the $L^1$ norm)
like $N\log(N)$.

Note that the Fourier coefficients of $f$ are the nasty-looking \[
\hat{f}(n) = \sum_{m=\lceil\log_2\log_2\log_2(\abs{n})\rceil}^{\infty} \left(
    \frac{1}{2^m} - \frac{\abs{n}}{2^{2^{2^n}}+1}\right),
\] which nevertheless tend to $0$ as $\abs{n}\to\infty$ by the Riemann--Lebesgue Lemma.

In some situations, Fourier series may not be the ideal choice to expand our functions.
For instance, when working on the disc
$$\D:=\{(r\cos(2\pi\theta),r\sin(2\pi\theta)) : r,\theta\in[0,1]\},$$
it is convenient to expand $f\colon \D\to\C$ using eigenfunctions of the Laplacian on $\D$.
This leads to the (2-dimensional) \emph{Bessel--Fourier series} of $f$,
\begin{equation}\label{eqn:BFseries}
    \sum_{n=-\infty}^{\infty}\sum_{m=1}^{\infty}f_{n,m}J_{\abs{n}}(j_{\abs{n},m}r)\e^{\tpi n\theta},
\end{equation} where $J_{n}$ are Bessel functions of the first kind (of order $n\in\N$),
$j_{n,m}>0$ ($m\in\N$) are the zeros of $J_n$, and $f_{n,m}$ are the \emph{Bessel--Fourier coefficients} \[
    f_{n,m} := \frac{2}{J_{\abs{n}+1}(j_{\abs{n},m})^2}\int_0^1\int_0^1
        f(r,\theta)J_{\abs{n}}(j_{\abs{n},m}r)\e^{\tpi n\theta}r\dm{r}\dm{\theta}.
\] Note the $r\dm{r}$ in the measure coming from the use of polar coordinates on $\D$.

If $f$ is radial, \ie $f(r,\theta) \equiv f(r)$ independently of $\theta$,
then the only non-zero (angular) Fourier coefficient occurs for $n=0$.
In this manner, we obtain a one-dimensional Bessel--Fourier series for $f$ of order zero:
\begin{equation}
    \sum_{m=1}^{\infty}f_mJ_0(j_m x),
\end{equation} where we have set $j_m := j_{0,m}$ for notational convenience,
and the Bessel--Fourier coefficients now take the from \[
    f_m := \frac{2}{J_{1}(j_{m})^2}\int_0^1 f(x)J_{0}(j_{m}x)x\dm{x}.
\]

The corresponding `Riemann--Lebesgue lemma' for the $f_m$ is as follows \citep{Young1920}.
\begin{theorem}\label{thm:RLforBF}
    Fix $0\leq s \leq \frac{1}{2}$.
    If $x^{1/2+s}f(x)\in L^1([0,1])$ and $f_m$ are the Bessel--Fourier coefficients of $f$,
    then \[
        \lim_{m\to\infty} \frac{f_m}{j_m^{1/2+s}} = 0.
    \]
\end{theorem}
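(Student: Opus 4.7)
The plan is to reduce the Bessel--Fourier coefficient to a classical trigonometric integral and apply the ordinary Riemann--Lebesgue lemma. The two analytic inputs I would invoke at the outset are the asymptotic $J_1(j_m)^2 \sim 2/(\pi j_m)$, which implies $\frac{2}{J_1(j_m)^2 j_m^{1/2+s}} \asymp j_m^{1/2-s}$, and the two-term expansion
\[
    J_0(y) = \sqrt{\tfrac{2}{\pi y}} \cos(y - \tfrac{\pi}{4}) + O(y^{-3/2}) \quad \text{for } y \geq 1,
\]
together with the companion pointwise bound $|J_0(y)| \leq C \min(1, y^{-1/2})$. It therefore suffices to prove that $j_m^{1/2-s} \int_0^1 f(x) J_0(j_m x)\, x \dm{x} \to 0$.

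I would run an $\epsilon$-argument: given $\epsilon > 0$, pick $\delta \in (0,1)$ with $\int_0^\delta |f(x)| x^{1/2+s} \dm{x} < \epsilon$ and split the integral at $\delta$. For the near-zero part, split $[0,\delta]$ further at $x = 1/j_m$ (valid once $m$ is large). On $[0, 1/j_m]$, use $|x J_0(j_m x)| \leq x = x^{1/2+s} \cdot x^{1/2-s} \leq j_m^{s-1/2} x^{1/2+s}$, where $s \leq 1/2$ ensures the power $x^{1/2-s}$ is non-negative and hence bounded by its endpoint value. On $[1/j_m, \delta]$, the pointwise bound gives $|x J_0(j_m x)| \leq C\sqrt{x/j_m} = C j_m^{-1/2} x^{1/2+s} \cdot x^{-s} \leq C j_m^{s-1/2} x^{1/2+s}$, this time using $s \geq 0$ so that $x^{-s} \leq j_m^s$. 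Combining, $|x J_0(j_m x)| \leq C j_m^{s-1/2} x^{1/2+s}$ uniformly on $[0, \delta]$, so the near-zero contribution to $j_m^{1/2-s}|I_m|$ is at most $C \int_0^\delta |f| x^{1/2+s} \dm{x} < C\epsilon$.

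For $[\delta, 1]$, substitute the asymptotic expansion. The principal term produces $\sqrt{2/(\pi j_m)} \int_\delta^1 f(x) \sqrt{x} \cos(j_m x - \pi/4) \dm{x}$; since $f \in L^1([\delta,1])$ follows trivially from the hypothesis, $\sqrt{x} f(x) \in L^1([\delta,1])$ and the classical Riemann--Lebesgue lemma drives this integral to zero. The compensating factor $j_m^{1/2-s} \cdot j_m^{-1/2} = j_m^{-s}$ is bounded because $s \geq 0$. The remainder from the expansion contributes $O(j_m^{-3/2} \delta^{-3/2}) \cdot \|x^{1/2+s} f\|_{L^1}$, so after multiplication by $j_m^{1/2-s}$ it is $O(j_m^{-1-s} \delta^{-3/2}) \to 0$ for fixed $\delta$. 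Assembling both halves, $\limsup_m |f_m|/j_m^{1/2+s} \leq C\epsilon$, and letting $\epsilon \to 0$ closes the argument.

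The hard step is the near-zero bound. The naive size estimate $|J_0| \leq 1$ leaves us with $\int_0^\delta |f| x \dm{x}$, which after multiplication by $j_m^{1/2-s}$ has no redeeming decay. The essential trick is the two-scale split at precisely $x = 1/j_m$, the transition point of the uniform bound on $J_0$, chosen so that on each piece the factorisation of $x$ (respectively $\sqrt{x}$) into a factor $x^{1/2+s}$ absorbed by the hypothesis and a factor bounded pointwise extracts exactly the power $j_m^{s-1/2}$ needed to cancel the prefactor, with the constraints $s \geq 0$ and $s \leq 1/2$ used respectively on the two sub-intervals.
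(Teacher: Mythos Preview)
Your proof is correct and follows the same approach the paper sketches: use the asymptotic $J_0(y)\sim\sqrt{2/(\pi y)}\cos(y-\pi/4)$ together with $J_1(j_m)^2\asymp j_m^{-1}$ to reduce the coefficient to a trigonometric integral against an $L^1$ function, then invoke the classical Riemann--Lebesgue lemma. The paper only gives a one-line indication of this (citing \cite{Young1920}) and does not spell out the near-zero analysis; your two-scale split at $x=1/j_m$, using $s\le\tfrac12$ on $[0,1/j_m]$ and $s\ge0$ on $[1/j_m,\delta]$, is exactly the detail needed to make that sketch rigorous.
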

This can be proved by recalling that $J_0(x)$ behaves asymptotically like $\frac{1}{\sqrt{x}}\cos(x)$;
the $\sqrt{x}$ is controlled by the condition $x^{1/2+s}f(x)\in L^1$,
and one can apply the classical Riemann--Lebesgue Lemma for cosines to the function $x^sf(x)$.


Of particular interest are the following special cases:
\begin{align*}
    \sqrt{x}f(x) \in L^1 &\implies \lim_{m\to\infty} \frac{f_m}{\sqrt{j_m}} = 0\\
    xf(x) \in L^1 &\implies \lim_{m\to\infty} \frac{f_m}{j_m} = 0.
\end{align*}

In light of \cref{thm:RLforBF}, we might suspect that we can find $L^1$ functions
whose coefficients can be made to blow up
at any given rate below $\sqrt{j_m}$ (whenever $\sqrt{x}f(x)\in L^1$) or even $j_m$
(if $xf(x)\in L^1$).
The following example supports our suspicions.
\begin{example}\label{ex:falpha}
    Consider the functions $f^{(\alpha)}(x)=x^{-\alpha}$ for $\alpha>0$, so that
    $xf^{(\alpha)}(x)\in L^1$ if, and only if, $\alpha<2$.
    Taking $\alpha = 2-\epsilon$, the functions $f^{(\alpha)}$ are in $L^1(x\dm{x})$
    for all $\epsilon>0$.
    Their Bessel--Fourier coefficients can be estimated as follows
    (where `$\approx$' is a shorthand for `neglecting constants'):
    \begin{align*}
        f^{(\alpha)}_m &= \frac{2}{J_1(j_m)^2}\int_0^1 x^{-2+\epsilon}J_0(j_m x)x\dm{x}\\
                       &\approx j_m\int_0^{j_m}j_m^{1-\epsilon}\frac{J_0(y)}{y^{1-\epsilon}}\frac{\dm{y}}{j_m}\\
                       &\approx j_m^{1-\epsilon}\left(\int_0^{j_1}\frac{J_0(y)}{y^{1-\epsilon}}\dm{y}
                            +\int_{j_1}^{j_m}\frac{\cos(y-\pi/4)}{y^{3/2-\epsilon}}\dm{y}
                            +\int_{j_1}^{j_m}O(y^{-5/2+\epsilon})\dm{y}\right)
    \end{align*} The first integral is clearly finite;
    the third is clearly bounded uniformly in $j_m$.
    Finally, one can exploit the oscillations in the cosine to show that the second integral is also bounded
    (see \cref{sec:KMbounds} or \citealp{Zygmund2003}).
    Therefore, $f^{(\alpha)}_m = O(j_m^{1-\epsilon})$.

    In particular, taking $\epsilon=\frac{1}{2}$, we have $f^{(1/2)}(x) = x^{-3/2}$,
    so that $xf^{(1/2)}(x)\in L^1$ but $\sqrt{x}f^{(1/2)}(x)\not\in L^1$.
    In this case, the coefficients decay as $\sqrt{j_m}$.
    This example shows that $\sqrt{j_m}$ is a `limiting rate of growth' for functions in $L^1(\sqrt{x}\dm{x})$.
\end{example}

The decay of Bessel--Fourier coefficients has been exploited recently by \cite{SaadiDaher2022,SaadiDaher2023}
to study when a function belongs to a Lipschitz class.

In this paper we prove the following analogue of \cref{thm:ConRL} for
Bessel--Fourier series,
which is a partial converse to \cref{thm:RLforBF}.

\begin{theorem}\label{thm:mainthm}
    Let $0\leq \alpha <\frac{1}{2}$ and
    $a_m\geq 0$ be a sequence with $a_m\to 0$.
    Then, there is a function $f$, with $x^{\alpha+1}f(x)\in L^1$,
    whose Bessel--Fourier coefficients of order zero, $f_m$, satisfy \[
        0 \leq a_m \leq \frac{f_m}{j_m^\alpha} \to 0\qtext{as} m \to \infty.
    \] Furthermore, $f$ can be chosen so that $f_{2m}=0$ for all $m\in\N$,
    or so that $f_{2m+1} = 0$ for all $m\in\N$.
\end{theorem}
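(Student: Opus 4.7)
The plan is to mirror the proof of \cref{thm:ConRL} given in the excerpt: express $f$ as a lacunary superposition $f = \sum_k w_k K_{N_k}$, where $K_N$ plays the role of a Bessel--Fourier analogue of the Fej\'er kernel, $(N_k)$ grows very rapidly (as in the triple-exponential choice appearing in the excerpt), and the weights $w_k$ decrease to zero.

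The first step is to relate each Bessel--Fourier coefficient to an ordinary cosine integral via the asymptotic $J_0(y) = \sqrt{2/(\pi y)}\cos(y - \pi/4) + O(y^{-3/2})$ as $y\to\infty$, together with $J_1(j_m)^2 \sim 2/(\pi j_m)$. Writing $h(x) := x^{1/2}f(x)$ and substituting into the defining integral yields, up to controlled errors,
\[
    \frac{f_m}{j_m^\alpha} \;\approx\; \sqrt{2\pi}\,j_m^{1/2-\alpha}\!\int_0^1 h(x)\cos(j_m x - \pi/4)\dm{x},
\]
where the error terms (coming from the region $0<x<1/j_m$ in which the asymptotic fails, from the $O(y^{-3/2})$ remainder, and from the correction to $J_1(j_m)^2$) are tame because the weight $x^{\alpha+1}$ kills the singular behaviour of $J_0(j_m x)$ at the origin and because $\alpha<1/2$. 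The condition $x^{\alpha+1}f \in L^1$ becomes $x^{\alpha+1/2}h\in L^1$, and the target $f_m/j_m^\alpha \geq a_m$ becomes a prescribed lower bound on the cosine integrals of $h$ at the frequencies $j_m$, with required magnitude of order $a_m j_m^{\alpha-1/2}$, a sequence that itself tends to zero.

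I would then construct the kernels $K_N$ as combinations $K_N = \sum_{m=1}^N \mu_m^{(N)} J_0(j_m\cdot)$ with Ces\`aro-type weights $\mu_m^{(N)}$, so that orthogonality gives $(K_N)_m = \mu_m^{(N)}$ on $m\leq N$ and $0$ beyond. Using the elementary bound $\|J_0(j_m\cdot)\|_{L^1(x^{\alpha+1}\dm{x})} \lesssim j_m^{-1/2}$ (again a consequence of the same asymptotic expansion and of $\alpha<1/2$), one estimates $\|K_N\|_{L^1(x^{\alpha+1}\dm{x})}$ and tunes the triple $(\mu_m^{(N)}, N_k, w_k)$ so that $f \in L^1(x^{\alpha+1}\dm{x})$ and its coefficients satisfy both $f_m/j_m^\alpha \geq a_m$ and $f_m/j_m^\alpha \to 0$. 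The parity constraint is accommodated by restricting $K_N$ to even (respectively odd) indices $m$: since $\{J_0(j_{2m}\cdot)\}$ and $\{J_0(j_{2m+1}\cdot)\}$ are orthogonal subsystems of $L^2(x\dm{x})$, dropping the unwanted indices in the definition of $K_N$ forces the corresponding Bessel--Fourier coefficients of $f$ to vanish identically.

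The main obstacle is the construction of $K_N$ with simultaneously a uniform $L^1(x^{\alpha+1}\dm{x})$-bound and the correct coefficient lower bounds. The classical Fej\'er kernel owes both its positivity and its unit $L^1$-norm to being a perfect square, whereas the Bessel analogue admits no such explicit factorisation, so one must exploit the nearly arithmetic spacing $j_{m+1}-j_m \to \pi$ and cancellation from the oscillations of $J_0$ to bound the $L^1$-norm in the weighted space. The hypothesis $\alpha<1/2$ is precisely what provides the margin needed for the bookkeeping to close, and its failure at $\alpha=1/2$ is consistent with the conjectural status of that borderline case in the abstract.
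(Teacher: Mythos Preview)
Your outline has the right instinct (build a Fej\'er-type kernel and superpose), but it contains a genuine gap and also misreads the model argument.

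First, the lacunary sum $\sum_k 2^{-k}F_{2^{2^{2^k}}}$ in the introduction is \emph{not} the proof of \cref{thm:ConRL}; it is a separate example of an $L^1$ function with divergent Fourier series. The proof of \cref{thm:ConRL} (Grafakos, Theorem~3.3.4) does not use a lacunary subsequence at all: one replaces $(a_m)$ by a convex majorant $(c_m)$ and writes
\[
    f=\sum_{l\geq 0}(l+1)\bigl(c_l+c_{l+2}-2c_{l+1}\bigr)\,K_l,
\]
so that the telescoping identity $\sum_l (l+1)(c_l+c_{l+2}-2c_{l+1})=c_0$ controls the $L^1$ norm and a second telescoping computation gives $f_m=c_m$ exactly. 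The paper follows this scheme verbatim with $K_l$ replaced by $K_l^\alpha(x)=\sum_{m\le l}\bigl(1-\tfrac{m}{l+1}\bigr)j_m^\alpha J_0(j_mx)$, obtaining $f_m=j_m^\alpha c_m$.

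Second, and more seriously, your proposed mechanism for bounding $\|K_N\|_{L^1(x^{\alpha+1}\dm x)}$ does not work. The termwise estimate $\|J_0(j_m\cdot)\|_{L^1(x^{\alpha+1}\dm x)}\lesssim j_m^{-1/2}$ is correct, but feeding it into the triangle inequality gives only
\[
    \|K_N^\alpha\|_{L^1(x^{\alpha+1}\dm x)}\ \lesssim\ \sum_{m=1}^N j_m^{\alpha}\,j_m^{-1/2}\ \approx\ \sum_{m=1}^N m^{\alpha-1/2}\ \approx\ N^{\alpha+1/2},
\]
which is \emph{not} uniform in $N$ for any $\alpha\geq 0$. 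In a lacunary scheme $f=\sum_k w_k K_{N_k}^\alpha$ one would then need $\sum_k w_k N_k^{\alpha+1/2}<\infty$; but to force $f_m/j_m^\alpha\geq a_m$ for every $m$ in the gap $(N_{k-1},N_k]$ one must take $N_{k-1}$ large enough that $\sup_{m>N_{k-1}}a_m\leq \sum_{k'\geq k}w_{k'}$, and when $a_m\to 0$ very slowly (e.g.\ $a_m=1/\log\log m$) the required $N_k$ grow so fast that $\sum_k w_k N_k^{\alpha+1/2}$ diverges. So the ``tuning'' you allude to cannot be carried out with a termwise bound.

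What the paper actually proves is a \emph{uniform} bound $\|x^{\alpha+1}K_M^\alpha\|_{L^1}=O(1)$, and the key lemma behind it is not positivity but a one-sided estimate: there is $C>0$ with
\[
    x^{\alpha+1}K_M^\alpha(x)\ \geq\ -C\qquad\text{for all }M\in\N,\ x\in[0,1].
\]
Once this is known, $\|x^{\alpha+1}K_M^\alpha\|_1\leq 2C+\int_0^1 x^{\alpha+1}K_M^\alpha(x)\dm x$, and the integral on the right is computed term by term (here the oscillation of $J_0$ \emph{does} enter, via one integration by parts of $\int_1^{j_m}\cos(y-\pi/4)\,y^{\alpha+1/2}\dm y$) and shown to be $O(1)$ precisely because $\alpha<1/2$. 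The lower bound itself is the technical heart of the paper: it reduces to bounding the partial sums $S_M^\alpha(x)=\sum_{m\le M}j_m^\alpha J_0(j_mx)$ from below, which is done by comparing $\sum_m j_m^{\alpha-1/2}\cos(j_mx-\pi/4)$ to the integral $\int_0^{j_M}\cos(ux-\pi/4)\,u^{\alpha-1/2}\dm u$ and controlling the remainder $\sum_m x^{\alpha+1}j_m^\alpha R(j_mx)$ using the explicit error term for $J_0$. Your last paragraph correctly senses that cancellation is essential, but the concrete device that delivers it is this one-sided inequality, and without it the argument does not close.
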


We also show, in \cref{sec:conc}, how to construct an $L^1$ function on the disc whose Bessel--Fourier
series diverges in norm for any possible truncation of the partial sums,
assuming that \cref{thm:mainthm} can be improved to coefficients with $a_m = o(\sqrt{j_m})$.
Here, the condition $f_{2m+1}=0$ will be crucial.

Note that \cref{thm:mainthm} does not prove that \cref{thm:RLforBF} is `sharp'.
Indeed,
\begin{itemize}
    \item in \cref{thm:RLforBF} we need only assume that $x^{\alpha+1/2}f(x) \in L^1$
    to get a decay of $f_m = o(j_m^{\alpha+1/2})$, for all $0\leq \alpha \leq \frac{1}{2}$;
    \item in \cref{thm:mainthm} we construct an $f$ with $x^{\alpha+1}f(x)\in L^1$
    with a decay of $f_m = o(j_m^\alpha)$ for $0\leq \alpha < \frac{1}{2}$.
\end{itemize}
Our result requires an extra $\sqrt{x}$, loses a factor of $\sqrt{j_m}$ in the decay,
and leaves the end-point $\alpha=\frac{1}{2}$ open.

However, assuming that a kernel analogous to $F_N$ is positive, we can show a stronger result,
namely, that for any $0\leq \alpha < \frac{1}{2}$, there is an 
$f$ with $\sqrt{x}f(x) \in L^1$ whose coefficients decay as $f_m = o(j_m^\alpha)$,
which is in line with \cref{thm:RLforBF} for $s=0$.
This positivity conjecture is supported by numerical calculations.

The paper is organised as follows:
in \cref{sec:bg} we record some necessary facts about
Bessel functions and their zeros.
In \cref{sec:wdthm}, we give a rigorous proof of \cref{thm:mainthm},
without assuming any additional results;
while in \cref{sec:sdthm} we formulate and prove a stronger
version of \cref{thm:mainthm}, assuming a plausible conjecture.
\cref{sec:KMbounds} contains a proof of a technical, but crucial
lemma used in the proof of \cref{thm:mainthm}.
Finally, in \cref{sec:conc} we discuss how these methods
could be applied to construct an $L^1$ function on the disc
whose Bessel--Fourier series diverges regardless of how one chooses
to truncate the partial sums.

\paragraph{Notation.} Throughout, we will employ the following asymptotic notation:
\begin{align*}
    f(x) &\lesssim g(x) &\iff &f(x) \leq Cg(x) \text{ for some absolute constant } C>0\\
    f(x) &\approx g(x) &\iff &f(x) \lesssim g(x) \qand g(x) \lesssim f(x)\\
    f(x) &= O(g(x)) &\iff &\abs{f(x)} \lesssim g(x)\\
    a_m &= o(b_m) &\iff &\frac{a_m}{b_m} \to 0 \qtext{as} m \to \infty.
\end{align*}

We adopt the convention whereby $\N$ is the set of \emph{positive} integers,
with $\N_0 := \N\cup\{0\}.$
\section{Bessel functions and their asymptotics}\label{sec:bg}
In order to prove \cref{thm:mainthm}, we will need the following facts about
the Bessel function $J_0$.

\begin{lemma}\label{lem:Jasym}
    For any $r>0$, we have
    \begin{equation}\label{eqn:J0asym}
        J_0(r) = \sqrt{\frac{2}{\pi}}\frac{\cos(x-\pi/4)}{\sqrt{x}} + R(r),
    \end{equation}
    where
    \begin{equation}\label{eqn:J0err}
        \abs{R(r)} \lesssim \int_{0}^{2}\e^{-rt}\sqrt{t}\dm{t}
            + \int_{2}^{\infty}\frac{\e^{-rt}}{t}\dm{t}.
    \end{equation}
    Furthermore, \begin{equation}\label{eqn:J0errorlarger}
        \abs{R(r)} \lesssim r^{-3/2} \qtext{for} r \geq 1.
    \end{equation}
\end{lemma}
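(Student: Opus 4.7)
My strategy is a classical steepest-descent argument starting from the Poisson integral representation
\[
    J_0(r) = \frac{2}{\pi}\int_0^1 \frac{\cos(rt)}{\sqrt{1-t^2}}\dm{t}.
\]
The substitution $u = 1 - t$ rewrites this as
\[
    J_0(r) = \frac{2}{\pi}\Re\!\left[\e^{\i r}\int_0^1 \frac{\e^{-\i r u}}{\sqrt{u(2-u)}}\dm{u}\right].
\]
I would regard the $u$-integrand as holomorphic on $\C\setminus((-\infty,0]\cup[2,\infty))$ (principal branch of the square root) and deform the real segment $[0,1]$ into the lower half-plane via a rectangular contour through $-\i T$ and $1-\i T$. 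Since $|\e^{-\i r u}| = \e^{-r|\Im u|}$ while $\sqrt{u(2-u)}$ grows only algebraically, the horizontal segment at height $-T$ is $O(\e^{-rT}/T)$ and vanishes as $T\to\infty$, so Cauchy's theorem reduces the integral to two vertical Laplace integrals.

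On the right segment $u = 1 - \i\tau$ one has $u(2-u) = 1 + \tau^2$, so this piece equals $\i\e^{-\i r}\int_0^\infty \e^{-r\tau}/\sqrt{1+\tau^2}\dm{\tau}$; multiplied by $\e^{\i r}$ it is purely imaginary and drops out upon taking real parts. On the left segment $u = -\i s$, a direct computation gives
\[
    J_0(r) = \frac{2}{\pi}\Re\!\left[-\i\e^{\i r}\int_0^\infty \frac{\e^{-rs}}{\sqrt{s}\sqrt{s - 2\i}}\dm{s}\right].
\]
To extract the leading term I would split
\[
    \frac{1}{\sqrt{s-2\i}} = \frac{1}{\sqrt{-2\i}} + \left(\frac{1}{\sqrt{s-2\i}} - \frac{1}{\sqrt{-2\i}}\right),
\]
noting $1/\sqrt{-2\i} = \e^{\i\pi/4}/\sqrt{2}$ and $\int_0^\infty \e^{-rs}/\sqrt{s}\dm{s} = \sqrt{\pi/r}$. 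Combining these with the prefactor $-\i\e^{\i r}\cdot(2/\pi) = (2/\pi)\e^{\i(r - \pi/2)}$ produces exactly the advertised leading term $\sqrt{2/(\pi r)}\cos(r - \pi/4)$.

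The remainder is then bounded by $(2/\pi)\int_0^\infty \e^{-rs}/\sqrt{s}\cdot |1/\sqrt{s-2\i} - 1/\sqrt{-2\i}|\dm{s}$. Rationalising,
\[
    \left|\frac{1}{\sqrt{s-2\i}} - \frac{1}{\sqrt{-2\i}}\right|
    = \frac{s}{\bigl|\sqrt{-2\i}\,\sqrt{s-2\i}\,(\sqrt{-2\i} + \sqrt{s-2\i})\bigr|},
\]
which is $\lesssim s$ on $[0,2]$ (the denominator is bounded below there), giving an integrand $\lesssim \e^{-rs}\sqrt{s}$ on this range. For $s > 2$ I would bound $|1/\sqrt{s-2\i}|$ and $|1/\sqrt{-2\i}|$ separately; using $(s^2+4)^{1/4} \geq s^{1/2}$ to absorb the $1/\sqrt{s}$ shows that the whole integrand is $\lesssim \e^{-rs}/s$ on this range, which yields \eqref{eqn:J0err}. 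For \eqref{eqn:J0errorlarger}, I would use $\int_0^2 \e^{-rs}\sqrt{s}\dm{s} \leq \Gamma(3/2)r^{-3/2}$ and $\int_2^\infty \e^{-rs}/s\dm{s} \leq \e^{-2r}/(2r) = O(r^{-3/2})$ for $r\geq 1$.

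The principal technicality is the contour deformation itself: maintaining a consistent branch of $\sqrt{u(2-u)}$ as the path enters the lower half-plane (including a small indentation around the branch point $u = 0$, justified because $1/\sqrt{u}$ is locally integrable), and verifying the vanishing of the horizontal segment. Once those points are settled, everything else is a direct manipulation of Laplace integrals.
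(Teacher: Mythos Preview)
The paper does not actually prove this lemma: it cites Appendix~B of Grafakos for the general asymptotics and defers the specific form of \eqref{eqn:J0err} to \cite{AcostaBabb2024}. So there is no in-paper argument to compare against, and your contour-deformation approach is a perfectly standard and appropriate way to derive such a bound.

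That said, there is a genuine slip in your tail estimate. For $s>2$ you propose to bound $\bigl|1/\sqrt{s-2\i}\bigr|$ and $\bigl|1/\sqrt{-2\i}\bigr|$ separately; the first term does combine with the outer $1/\sqrt{s}$ to give $\lesssim 1/s$, but the second term is a nonzero constant, so the integrand is only $\lesssim \e^{-rs}/\sqrt{s}$ on $[2,\infty)$, not $\e^{-rs}/s$. (Equivalently: as $s\to\infty$ the difference $1/\sqrt{s-2\i}-1/\sqrt{-2\i}$ tends to a nonzero constant, so your rationalised expression is only bounded, not decaying.) Consequently your argument yields
\[
    \abs{R(r)} \lesssim \int_0^2 \e^{-rt}\sqrt{t}\dm{t} + \int_2^\infty \frac{\e^{-rt}}{\sqrt{t}}\dm{t},
\]
which is strictly weaker than \eqref{eqn:J0err} for small $r$ (the second integral behaves like $r^{-1/2}$ rather than $\log(1/r)$ as $r\to 0^+$).

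This weaker bound is entirely adequate for \eqref{eqn:J0errorlarger} (since $\int_2^\infty \e^{-rt}/\sqrt{t}\dm{t}\leq \e^{-2r}/(r\sqrt{2}) = O(r^{-3/2})$ for $r\geq 1$) and also for the sole place the integral form is used later in the paper (the $R^\alpha_M$ estimate in \cref{sec:KMbounds}, where $t^{-1/2}$ in place of $t^{-1}$ still leaves a convergent $\int_2^\infty t^{-\alpha-3/2}\dm{t}$). But as written your argument does not establish \eqref{eqn:J0err} exactly; you should either weaken the stated bound accordingly or supply a different estimate for the tail.
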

These are well-known results; see for instance Appendinx B in \cite{GrafakosCFA}.
The specific form of the bound (\ref{eqn:J0err})
is derived in detail in \cite{AcostaBabb2024}.

We will also use the following information about the zeros of $J_0$,
and how they interact with $J_1$.
\begin{lemma}\label{lem:zeros}
    Let $j_m$ denote the $m\th$ positive zero of $J_0$. Then
    \begin{equation}\label{eqn:jmm}
        j_m > m \qfa m \geq 1.
    \end{equation}
    Furthermore,
    \begin{equation}\label{eqn:jmJ1jmasym}
        J_1(j_m) = O\left(\frac{1}{\sqrt{j_m}}\right).    
    \end{equation}
\end{lemma}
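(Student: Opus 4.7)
The plan is to deduce each bound from standard asymptotic information about Bessel functions and their zeros; neither claim is deep.

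For (\ref{eqn:jmm}), I would appeal to the classical one-sided refinement of McMahon's expansion, namely $j_m \geq (m - \tfrac{1}{4})\pi$ for every $m \geq 1$ (see, e.g., Watson's treatise on Bessel functions). Since $\pi > 3$, this gives
$$j_m \geq (m - \tfrac{1}{4})\pi \geq 3m - \tfrac{3}{4} > m,$$
as $2m > 3/4$ holds for every $m \geq 1$. If a fully self-contained argument is preferred, the cases $m = 1, 2, \ldots, m_0$ can be checked against tabulated numerical values of the first few zeros ($j_1 \approx 2.4048$, $j_2 \approx 5.5201$, \ldots), and $m > m_0$ handled by a quantitative version of McMahon's expansion with explicit error term.

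For (\ref{eqn:jmJ1jmasym}), I would use the analogue of Lemma \ref{lem:Jasym} for the Bessel function of order one:
$$J_1(r) = \sqrt{\frac{2}{\pi r}}\cos(r - 3\pi/4) + R_1(r), \qquad |R_1(r)| \lesssim r^{-3/2} \text{ for } r \geq 1.$$
This can be proved by the same saddle-point-type estimates that underlie (\ref{eqn:J0err})--(\ref{eqn:J0errorlarger}), or alternatively derived from the identity $J_1 = -J_0'$ by differentiating (\ref{eqn:J0asym}) with appropriate care for the error term. Since $j_m \geq j_1 > 1$ by (\ref{eqn:jmm}) and $|\cos(\cdot)| \leq 1$, evaluating at $r = j_m$ gives
$$|J_1(j_m)| \leq \sqrt{\tfrac{2}{\pi j_m}} + C j_m^{-3/2} \lesssim j_m^{-1/2},$$
which is exactly the desired asymptotic.

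The only real obstacle, and even that is modest, is establishing the \emph{uniform} (in $m$) lower bound $j_m \geq (m - 1/4)\pi$ rather than the purely asymptotic statement $j_m = (m - 1/4)\pi + O(1/m)$. Once that inequality is in hand, both conclusions are essentially immediate consequences of the standard large-argument expansions of $J_0$ and $J_1$.
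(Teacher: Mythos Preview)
Your proposal is correct and follows essentially the same route as the paper: the paper derives (\ref{eqn:jmm}) from the inequality $j_m \geq \pi m - \pi/4$ (citing Hethcote rather than Watson/McMahon, but it is the same bound $(m-\tfrac14)\pi$ you use), and obtains (\ref{eqn:jmJ1jmasym}) from the large-argument asymptotics for $J_1$ (citing Tolstov rather than writing out the expansion). Your write-up is slightly more explicit on both points, but there is no substantive difference in method.
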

\begin{proof}
    The estimate in \cref{eqn:jmJ1jmasym} follows from the asymptotics
    for $J_1$; see \cite{Tolstov1976}.
    On the other hand, (\ref{eqn:jmm}) is easily obtained from the following inequality \citep[Theorem 3]{Hethcote1970} \[
        j_{m} \geq \pi m  - \frac{\pi}{4} \qfa m \geq 1. \qedhere
    \]
\end{proof}
\begin{remark}
    Numerical calculations suggests that the sequence $J_1(j_m)$ oscillates
    and that $\abs{J_1(j_m)}$ is decreasing.
    We do not at present have a proof of these facts,
    but will refer to them again later when discussing possible generalisations and applications
    of our results.
\end{remark}

Finally, we record two useful formulas, which will be needed in several calculations.
The first is
\begin{equation}\label{eqn:J0prime}
    J'_0(x) = - J_1(x) \qfa x \in \R,
\end{equation}
and can be found in Appendix B to \cite{GrafakosCFA}.
The second is
\begin{equation}\label{eqn:intJ0rdr}
    \int_0^{j_m}J_0(y)y\dm{y} = j_mJ_1(j_m) \qfa m \in \N.
\end{equation}
To see this, recall that $J_0$ satisfies Bessel's differential equation: \[
    x^2J_0''(x) + xJ_0'(x) + x^2J_0(x) = 0,
\] and that $J_0'(x) \equiv -J_1(x)$.
Integrating the differential equation therefore leads us to \cref{eqn:intJ0rdr}.

\section{`Weak' decay theorems}\label{sec:wdthm}
In order to construct a function whose Bessel--Fourier coefficients have the desired
decay properties, we will follow the argument of
Theorem 3.3.4 in \cite{GrafakosCFA}.

Recall that a sequence $(c_m)_m$ is \emph{convex} if \[
    c_m \geq 0 \qand c_{m} + c_{m+2} \geq 2 c_{m+1} \qfa m \in \N.
\] The following two lemmas are proved in Chapter 3 of \cite{GrafakosCFA}.
\begin{lemma}[Lemma 3.3.2 in \citealp{GrafakosCFA}]\label{lem:conv}
    Given a sequence $a_m\geq 0$ tending to zero as $m\to\infty$,
    there is a convex sequence $c_m\geq a_m$ that also tends to zero.
\end{lemma}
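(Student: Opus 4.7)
The plan is to first reduce to the case of a non-increasing sequence and then construct the convex majorant by piecewise-linear interpolation at carefully spaced breakpoints, in the spirit of the classical argument for ordinary Fourier series.

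First I would replace $a_m$ by $b_m := \sup_{k \geq m} a_k$, which is non-increasing, still tends to zero, and dominates $a_m$ pointwise. It therefore suffices to build a convex sequence $c_m \geq b_m$ with $c_m \to 0$.

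Next, using $b_m \to 0$, I would select breakpoints $1 = N_0 < N_1 < N_2 < \cdots$ recursively so as to guarantee simultaneously that (a) $b_m \leq 2^{-j}$ for all $m \geq N_j$, and (b) $N_{j+1} - N_j \geq (N_j - N_{j-1})/2$. Both conditions can be met by a greedy choice: let $N_{j+1}$ be the larger of the first index beyond which $b_m$ drops below $2^{-j-1}$ and $N_j + \lceil (N_j - N_{j-1})/2\rceil$. I would then let $c_m$ be the piecewise-linear interpolation of the data $(N_j, 2^{-j+1})$, with a one-time rescaling of the initial value to ensure $c_1 \geq b_1$.

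The verification then reduces to three routine checks. Convergence $c_m \to 0$ is immediate from the halving of the interpolated values at successive breakpoints. The majorization $c_m \geq b_m$ holds because on $[N_j, N_{j+1}]$ the interpolated value lies in $[2^{-j}, 2^{-j+1}]$, whereas $b_m \leq 2^{-j}$ by the choice of $N_j$. Convexity is equivalent to the slopes $s_j := -2^{-j}/(N_{j+1}-N_j)$ being non-decreasing in $j$, which is precisely the growth condition (b). The only conceptually delicate point is the simultaneous balancing of halving breakpoint values (needed to drive $c_m \to 0$) against at-most-doubling breakpoint gaps (needed to keep the slopes non-decreasing); once this trade-off is encoded in the greedy recursion, no real obstacle remains, and the rest is elementary.
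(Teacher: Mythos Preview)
The paper does not prove this lemma; it merely cites it from Grafakos. Your construction is correct and is essentially the standard piecewise-linear argument used there: pass to the decreasing envelope, pick breakpoints where it drops below $2^{-j}$ while keeping successive gaps from shrinking by more than a factor of two, and interpolate linearly. The only places to be careful are the base case ($b_1$ need not be $\leq 1$, which your one-time rescaling handles) and the slope check, which you have set up correctly as $N_{j+1}-N_j \geq (N_j-N_{j-1})/2$.
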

\begin{lemma}[Lemma 3.3.3 in \citealp{GrafakosCFA}]\label{lem:telescope}
    Given a convex decreasing sequence $c_m\to 0$ and a fixed integer $k\geq 0$, \[
        \sum_{l=0}^{\infty}(l+1)(c_{l+k}+c_{l+k+2}-2c_{l+k+1}) = c_k.
    \]
\end{lemma}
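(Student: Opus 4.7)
The plan is to reduce everything to a first-difference telescoping argument. I would start by introducing the forward differences
\[
    b_l := c_{l+k} - c_{l+k+1}, \qquad l \geq 0,
\]
and observe that $b_l \geq 0$ because $c$ is decreasing, while convexity gives $b_l - b_{l+1} = c_{l+k} + c_{l+k+2} - 2c_{l+k+1} \geq 0$, so $(b_l)$ is a nonnegative decreasing sequence. The summand in the statement is then $(l+1)(b_l - b_{l+1})$, and the partial telescoping
\[
    \sum_{l=0}^{N} b_l = c_k - c_{N+k+1} \longrightarrow c_k \qtext{as} N \to \infty
\]
shows $\sum_{l=0}^{\infty} b_l = c_k$.

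Next I would apply summation by parts (Abel summation) to the partial sums
\[
    S_N := \sum_{l=0}^{N}(l+1)(b_l - b_{l+1}).
\]
Splitting and reindexing the second piece by $l \mapsto l-1$ collapses everything to
\[
    S_N = \sum_{l=0}^{N} b_l - (N+1)\, b_{N+1}.
\]
In view of the telescoping identity above, it only remains to prove that $(N+1)\,b_{N+1} \to 0$.

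The one step that is not purely formal is this tail estimate, but it follows from a standard fact: a nonnegative decreasing summable sequence $(b_l)$ satisfies $Nb_N \to 0$. Indeed, since $b_l$ is decreasing,
\[
    (N+1)\,b_{2N+1} \leq \sum_{l=N+1}^{2N+1} b_l,
\]
and the right-hand side tends to $0$ as the tail of a convergent series; an analogous bound handles the odd indices, so $(N+1)\,b_{N+1} \to 0$. Combining, $S_N \to c_k$, which is precisely the claim. This is the argument I would write out; the only place where convergence (as opposed to algebra) is invoked is the tail bound, and that is the closest thing to an obstacle.
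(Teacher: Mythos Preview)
Your argument is correct: the reduction to first differences $b_l = c_{l+k}-c_{l+k+1}$, Abel summation giving $S_N = \sum_{l=0}^N b_l - (N+1)b_{N+1}$, and the standard tail estimate $Nb_N\to 0$ for nonnegative decreasing summable sequences together yield the claim. Note, however, that the paper does not actually supply a proof of this lemma---it merely cites \cite{GrafakosCFA}, Lemma~3.3.3---so there is no in-paper argument to compare against; your write-up is in fact the standard proof one finds in that reference.
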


Let us write $$S^{\alpha}_m(x) = \sum_{k=1}^{m}j_m^{\alpha}J_0(j_k x).$$
We will need the following `modified Fej\'er kernel' for the Bessel--Fourier series:
\[
    K^{\alpha}_M(x) := \frac{1}{M+1}\sum_{m=1}^{M}S^{\alpha}_m(x)
            = \sum_{m=1}^{M}\left(1-\frac{m}{M+1}\right)j_m^{\alpha}J_0(j_m x).
\] We will also set $K^{\alpha}_0(x) \equiv 1$ when $M=0$.

As we saw in \cref{eqn:fejer}, the classical Fej\'er kernel for Fourier series
has a nice closed form, which shows it to be non-negative for all $x$ and $N$;
this makes computing $L^1$ norms much easier.

To prove \cref{thm:mainthm}, it suffices to obtain uniform lower bounds on
$x^{\alpha+1}K^\alpha_M(x)$ for $0\leq \alpha<\frac{1}{2}$.
\begin{lemma}\label{lem:xFbounds}
    For $0\leq\alpha < \frac{1}{2}$, there is a constant $C>0$ such that \[
        x^{\alpha+1}K_{M}^{\alpha}(x) \geq -C \qtext{uniformly in} M\in\N \qand x\in[0,1].
    \]
\end{lemma}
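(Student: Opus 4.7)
The plan is to emulate the proof that the Fejér kernel is positive, by decomposing $K_M^\alpha$ via the asymptotic for $J_0$ from \cref{lem:Jasym} and reducing the question to a bounded oscillatory cosine sum. Since the case $x = 0$ is trivial, I may assume $x > 0$ and split the range of summation at the threshold $m_0 := \lceil 1/x \rceil$, which separates the regime $j_m x \lesssim 1$ (where the asymptotic is useless) from $j_m x \gtrsim 1$ (where it is sharp, using $j_m > m$ from \cref{eqn:jmm}).

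For the ``low-frequency'' part $m \leq m_0$, I would use the trivial bound $\abs{J_0(j_m x)} \leq 1$ together with $j_m \lesssim m$ to estimate
\[
    x^{\alpha+1} \sum_{m=1}^{m_0} \left(1 - \tfrac{m}{M+1}\right) j_m^{\alpha} \abs{J_0(j_m x)} \lesssim x^{\alpha+1} \sum_{m \leq 1/x} m^{\alpha} \lesssim 1.
\]
For $m > m_0$, substitute (\ref{eqn:J0asym}) and split into a main cosine term and a remainder. The remainder contribution is handled via (\ref{eqn:J0errorlarger}): since $\alpha < \tfrac{1}{2}$, the exponent $\alpha - \tfrac{3}{2}$ is less than $-1$, so $\sum_{m > m_0} j_m^{\alpha-3/2} \lesssim m_0^{\alpha-1/2} \approx x^{1/2 - \alpha}$, and multiplying by $x^{\alpha+1} \cdot x^{-3/2}$ yields an $O(1)$ bound.

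The main term, after multiplying by $x^{\alpha+1}$, takes the shape
\[
    \sqrt{\tfrac{2}{\pi}}\, x^{\alpha+1/2} \sum_{m=m_0+1}^M w_m \cos(j_m x - \pi/4), \qquad w_m := \left(1 - \tfrac{m}{M+1}\right) j_m^{\alpha - 1/2}.
\]
Since $\alpha < \tfrac{1}{2}$, both factors defining $w_m$ are positive and monotonically decreasing, so Abel summation gives
\[
    \left|\sum_{m=m_0+1}^M w_m \cos(j_m x - \pi/4)\right| \leq w_{m_0+1} \cdot \max_{m_0 < k \leq M} \left|\sum_{m=m_0+1}^{k} \cos(j_m x - \pi/4)\right|,
\]
and $w_{m_0+1} \lesssim x^{1/2 - \alpha}$. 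A bound of $O(1/x)$ on the cosine partial sums therefore yields precisely $x^{\alpha+1/2} \cdot x^{1/2 - \alpha} \cdot x^{-1} = 1$ for this contribution, closing the estimate.

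The principal obstacle is establishing the Dirichlet-type bound
\[
    \left|\sum_{m=a}^{b} \cos(j_m x - \pi/4)\right| \lesssim \frac{1}{x}, \qquad 0 < x \leq 1,
\]
uniformly in $1 \leq a \leq b$. This is plausible because $j_m = m\pi - \pi/4 + O(1/m)$, so $\cos(j_m x - \pi/4)$ is a small perturbation of $\sin(m\pi x)$ (whose partial sums are controlled by $\csc(\pi x/2) \lesssim 1/x$); the delicate point is showing that the $O(x/m)$ deviation, when summed, stays bounded uniformly in the endpoints. I expect this is precisely the technical lemma that is deferred to \cref{sec:KMbounds}.
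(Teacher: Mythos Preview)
Your approach is correct but genuinely different from the paper's. The paper does \emph{not} split the sum at $m_0\approx 1/x$; instead it first reduces from $K_M^\alpha$ to the partial sums $S_M^\alpha$ (so the Fej\'er weights disappear), then applies the asymptotic \eqref{eqn:J0asym} for \emph{all} $m\geq 1$. For the remainder this forces the use of the precise integral form \eqref{eqn:J0err} rather than the cruder $r^{-3/2}$ bound, and the paper handles $\sum_m j_m^\alpha R(j_m x)$ by summing the geometric series $\sum_m e^{-mxt}$ inside the $t$-integral and invoking \cref{lem:malphaebounds}. For the main cosine term the paper does \emph{not} use Abel summation and a Dirichlet bound; it converts the sum to an integral via the identity $\int_{j_m-\pi/2}^{j_m+\pi/2}\cos(ux-\pi/4)\,du = \frac{2\sin(\pi x/2)}{x}\cos(j_m x-\pi/4)$, controls the overlaps of the intervals $[j_m-\pi/2,j_m+\pi/2]$ using McMahon's expansion for $j_m$, and then shows directly that the local minima of $y\mapsto\int_0^y v^{\alpha-1/2}\cos v\,dv$ are increasing. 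So your expectation that \cref{sec:KMbounds} proves the Dirichlet-type bound $\bigl|\sum_{m=a}^b\cos(j_m x-\pi/4)\bigr|\lesssim 1/x$ is mistaken: that estimate never appears in the paper.

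That said, your route does go through. The Dirichlet bound you need is obtainable by a second Abel summation: writing $j_m=m\pi-\pi/4+\epsilon_m$ with $\epsilon_m$ positive and decreasing, the factors $e^{i\epsilon_m x}$ have total variation at most $x\epsilon_1\leq\epsilon_1$ over $m\in\N$, while the partial sums of $e^{im\pi x}$ are $O(1/x)$ for $x\in(0,1]$; Abel's inequality then gives the desired $O(1/x)$. Your scheme is arguably more elementary---it avoids both the integral error representation \eqref{eqn:J0err} and the sum-to-integral comparison---and it yields a two-sided bound $|x^{\alpha+1}K_M^\alpha(x)|\lesssim 1$ rather than only a lower bound. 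The paper's method, on the other hand, makes the structure of the main term more transparent (one literally sees a Fresnel-type integral) and adapts more directly to the even/odd kernel variants needed later.
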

We will prove \cref{lem:xFbounds} in \cref{sec:KMbounds}.
Here, we will re-state and prove \cref{thm:mainthm}.
\begin{theorem}[$=$ \cref{thm:mainthm}]
    Let $0\leq \alpha <\frac{1}{2}$ and
    $a_m\geq 0$ be a sequence such that $a_m\to 0$.
    Then, there is a function $f$, with $x^{\alpha+1}f(x)\in L^1$,
    whose Bessel--Fourier coefficients of order zero, $f_m$, satisfy \[
        0 \leq a_m \leq \frac{f_m}{j_m^\alpha} \to 0
            \qtext{as} m \to \infty.
    \] Furthermore, $f$ can be chosen so that $f_{2m}=0$ for all $m\in\N$,
    or so that $f_{2m+1} = 0$ for all $m\in\N$.
\end{theorem}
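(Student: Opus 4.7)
The plan is to adapt Grafakos's proof of \cref{thm:ConRL}: build $f$ as a convergent series of modified Fej\'er kernels $K_M^\alpha$ weighted by the non-negative second differences of a convex majorant of $a_m$. First, apply \cref{lem:conv} to obtain a convex decreasing sequence $c_m \geq a_m$ with $c_m \to 0$. Setting $\lambda_M := (M+1)(c_M + c_{M+2} - 2c_{M+1}) \geq 0$, define
\[
    f(x) := \sum_{M=1}^{\infty} \lambda_M K_M^{\alpha}(x).
\]
By the orthogonality of $\{J_0(j_k \cdot)\}$ with respect to $x\dm{x}$, the $m$-th Bessel--Fourier coefficient of $K_M^\alpha$ is $(1 - m/(M+1))\, j_m^\alpha$ for $1 \leq m \leq M$ and $0$ otherwise. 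Granting convergence, summing over $M \geq m$, reindexing $l = M - m$, and applying \cref{lem:telescope} with $k=m$ gives $f_m = c_m\, j_m^\alpha$, so $a_m \leq f_m / j_m^\alpha = c_m \to 0$ as claimed.

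The main obstacle is the convergence: showing $x^{\alpha+1} f \in L^1([0,1])$, i.e., a uniform bound on $\norml{x^{\alpha+1} K_M^\alpha}{1}{[0,1]}$. Since $K_M^\alpha$ need not be non-negative, I would lean on \cref{lem:xFbounds}: from $x^{\alpha+1} K_M^\alpha \geq -C$ one gets $\abs{x^{\alpha+1} K_M^\alpha} \leq x^{\alpha+1} K_M^\alpha + 2C$, reducing the problem to bounding the un-absoluted integral. Expanding $K_M^\alpha$ and rescaling $y = j_m x$, this reduces to controlling $j_m^{-2}\int_0^{j_m} y^{\alpha+1} J_0(y)\dm{y}$. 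Splitting $J_0$ via \cref{lem:Jasym} into its oscillatory main term and an $O(y^{-3/2})$ remainder, one integration by parts on $\int_1^{j_m} y^{\alpha+1/2}\cos(y - \pi/4)\dm{y}$ yields $\int_0^{j_m} y^{\alpha+1} J_0(y)\dm{y} = O(j_m^{\alpha+1/2})$, so each term contributes $O(j_m^{\alpha - 3/2})$. Since $j_m \gtrsim m$ by \cref{lem:zeros} and $\alpha - 3/2 < -1$ precisely when $\alpha < 1/2$, the resulting series in $m$ converges, \emph{exactly} matching the hypothesis of the theorem and explaining why the endpoint $\alpha = \tfrac12$ is excluded. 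Combined with $\sum_M \lambda_M \leq 2c_1 < \infty$ (immediate from \cref{lem:telescope} with $k=0$), this legitimises the definition of $f$ and the term-by-term computation above.

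For the \emph{furthermore} clause, I would replace $K_M^\alpha$ by its parity-restricted variant
\[
    K_M^{\alpha,\mathrm{odd}}(x) := \sum_{\substack{k \leq M \\ k\ \mathrm{odd}}} \left(1 - \frac{k}{M+1}\right) j_k^\alpha J_0(j_k x),
\]
and analogously $K_M^{\alpha,\mathrm{even}}$, whose Bessel--Fourier coefficients vanish on the complementary parity. The entire argument above transfers verbatim once one establishes the analogous uniform lower bound $x^{\alpha+1} K_M^{\alpha,\mathrm{odd/even}}(x) \geq -C$, which should succumb to the same oscillatory-integral techniques that underpin \cref{lem:xFbounds} in \cref{sec:KMbounds}; this is where the secondary technical effort lies.
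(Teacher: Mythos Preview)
Your proposal is correct and follows essentially the same route as the paper: construct $f$ as a series of Fej\'er-type kernels $K_M^\alpha$ weighted by the second differences of a convex majorant, use \cref{lem:xFbounds} to convert the $L^1$ bound into an un-absoluted integral, and then estimate $\int_0^{j_m} y^{\alpha+1}J_0(y)\,\mathrm{d}y$ via the asymptotics of \cref{lem:Jasym} and one integration by parts to obtain summable $O(j_m^{\alpha-3/2})$ terms. The only cosmetic difference is in the parity-restricted kernel: the paper reindexes as $K_M^{\alpha,e}(x)=\sum_{m=1}^M(1-\tfrac{m}{M+1})j_m^\alpha J_0(j_{2m}x)$ rather than simply dropping the odd-indexed terms from $K_M^\alpha$ as you do, but both variants admit the same lower-bound argument (see the Remark closing \cref{sec:KMbounds}) and yield the desired conclusion.
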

\begin{proof}
    By \cref{lem:conv} there is a decreasing convex sequence $c_m \geq a_m$
    such that $c_m\to 0$.
    (Note that we may need to take $c_0 = 2c_1$, say, in order
    to have a decreasing, convex sequence defined for $m\geq 0$.)
    We use $(c_m)_{m\geq 0}$ to construct the function
    \begin{equation}\label{eqn:fdefn}
        f(x) := \sum_{l=1}^{\infty}(l+1)(c_{l}+c_{l+2}-2c_{l+1})K_l^\alpha(x).
    \end{equation}
    We claim that $x^{\alpha+1}f(x)\in L^1$.

    First, by \cref{lem:xFbounds}, $x^{\alpha+1}K^\alpha_M(x)+C\geq 0$, so we have \[
        \norml{x^{\alpha+1}K^\alpha_M}{1}{[0,1]} \leq C + \int_0^1 (K^\alpha_M(x)x^{\alpha+1}+C)\dm{x} = 2C + \int_0^1 K^\alpha_M(x)x^{\alpha+1}\dm{x}.
    \]
    Writing $K^\alpha_M$ out as a sum, and changing variables to $y=j_mx$ in the integral, we have
    \begin{align}
        \int_0^1 K^\alpha_M(x)x^{\alpha+1}\dm{x}
            &= \sum_{m=1}^M\left(1-\frac{m}{M+1}\right)j_m^{\alpha}\int_0^1 J_0(j_m x)x^{\alpha+1}\dm{x}\nonumber\\
            &= \sum_{m=1}^M\left(1-\frac{m}{M+1}\right)\frac{j_m^{\alpha}}{j_m^{\alpha+2}}
                    \int_0^{j_m}J_0(y)y^{\alpha+1}\dm{y}. \label{eqn:intxKM}
    \end{align}

    We now split the integral into the ranges $0\leq y \leq 1$ and $1\leq y \leq j_m$,
    and use the asymptotics from \cref{lem:Jasym}:
    \begin{align}
        \frac{1}{j_m^2}\int_0^{j_m}J_0(y)y^{\alpha+1}\dm{y}
            &= \frac{1}{j_m^2}\int_0^{1}J_0(y)y^{\alpha+1}\dm{y}\label{eqn:int01alpha}\\
            &\quad+ \frac{1}{j_m^2}\int_1^{j_m}\sqrt{\frac{2}{\pi}}\cos(y-\pi/4)y^{\alpha+1/2}\dm{y}\label{eqn:int1jmcosalpha}\\
            &\quad+ \frac{1}{j_m^2}\int_1^{j_m}O\left(y^{\alpha-1/2}\right)\dm{y}\label{eqn:int1jmRalpha}.
    \end{align}
   Now, for (\ref{eqn:int01alpha}), we immediately have \[
        \frac{1}{j_m^{2}}\int_0^{1}J_0(y)y^{\alpha+1}\dm{y} \lesssim \frac{1}{j_m^{2}},
   \] and for (\ref{eqn:int1jmRalpha}), \[
        \frac{1}{j_m^{2}}\int_1^{j_m}O\left(y^{\alpha-1/2}\right)\dm{y}
            \lesssim \frac{1}{j_m^{2}}\left(
                j_m^{\alpha+1/2}-1\right) \lesssim \frac{1}{j_m^{3/2-\alpha}}.
   \] To get suitable decay in (\ref{eqn:int1jmcosalpha}), we have to integrate by parts.
   Neglecting nuisance factors of $\frac{2}{\pi}$, we have
   \begin{align*}
       \frac{1}{j_m^{2}}\int_1^{j_m}\cos(y-\pi/4)y^{\alpha+1/2}\dm{y}
        &= \frac{1}{j_m^{2}}\left[\sin(y-\pi/4)y^{\alpha+1/2}\right]_1^{j_m}\\
            &\quad-\frac{\alpha+1/2}{j_m^{2}}\int_1^{j_m}\sin(y-\pi/4)y^{\alpha-1/2}\dm{y}\\
        &\lesssim \frac{j_m^{\alpha+1/2}}{j_m^{2}} + \frac{1}{j_m^{2}}
            + \frac{1}{j_m^{2}}\\
        &\lesssim \frac{1}{j_m^{3/2-\alpha}}.
   \end{align*}
   (For the boundedness of the sine integral, see the argument in \cref{sec:KMbounds}.)
   Since $\alpha<1/2$, the terms in \cref{eqn:int01alpha,eqn:int1jmcosalpha,eqn:int1jmRalpha} are summable,
   and, therefore, so is (\ref{eqn:intxKM}).
    In other words, $\norml{x^{\alpha+1}K^\alpha_M}{1}{[0,1]} = O(1)$ uniformly in $M$,
    and so, from \cref{eqn:fdefn} and \cref{lem:telescope}, we see that \[
       \norml{x^{\alpha+1}f}{1}{[0,1]} \lesssim
           \sum_{l=0}^{\infty}(l+1)(c_{l}+c_{l+2}-2c_{l+1}) = c_0 < \infty, 
    \] completing the proof of our claim.

    We can now mimic the argument in \cite{GrafakosCFA}
    to show that \[
        f_m = j_m^\alpha c_m \geq a_m \qand \frac{f_m}{j_m^\alpha} = c_m \to 0.
    \]

    A slight modification of that argument shows that we can find such an $f$
    with the additional property that $f_{2m+1}=0$ for all $m\in\N$.
    It suffices to replace the kernel $K^\alpha_M$ with the `even kernel' \[
        K^{\alpha,e}_M(x) := \sum_{m=1}^{M}\left(1-\frac{m}{M+1}\right)
            j_m^{\alpha}J_{0}(j_{2m}x).
    \] As can be seen from the proof of \cref{lem:xFbounds} below,
    we still have that $x^{\alpha+1}K^{\alpha,e}_M(x) \geq -C$
    uniformly in $M\in\N$ and $x\in[0,1]$.
    We construct $f$ as in \cref{eqn:fdefn} and show that it satisfies $x^{\alpha+1}f(x)\in L^1$,
    just as before.
    Then, the same computation of the $f_m$ shows that
    \begin{equation}\label{eqn:f2ms}
        f_{2m} = j_m^\alpha c_m \qand f_{2m+1} = 0 \qfa m \in \N.
    \end{equation}
    Indeed,
    \begin{align*}
         (K^{\alpha,e}_M)_m &= \sum_{k=1}^{M}\left(1-\frac{k}{M+1}\right)
            \frac{2j_m^{\alpha}}{J_1(j_m)^2}\int_0^1 J_0(j_m x)J_0(j_{2k}x)x\dm{x}\\
            &= \sum_{k=1}^{M}\left(1-\frac{k}{M+1}\right)j_m^{\alpha}\delta_{m,2k},
    \end{align*}
    which vanishes whenever $m>2M$ or $m$ is odd. Thus, \[
        (K^{\alpha,e}_M)_m =
        \begin{cases}
            \left(1-\frac{m/2}{M+1}\right)j_{m/2}^{\alpha}, &\text{if $m\leq 2k$ is even};\\
             0, &\text{if $m$ is odd or $m > 2k$}.
        \end{cases}
    \] \cref{eqn:f2ms} now follows.

    By making the obvious adjustments, one can also construct an $f$ satisfying \[
        f_{2m} = 0 \qand f_{2m+1} = j_m^{\alpha}c_m \qfa m\in\N.\qedhere
    \]
\end{proof}
\section{`Strong' decay theorems}\label{sec:sdthm}

Numerical simulations suggest that $K^\alpha_M(x)\geq 0$ uniformly in $x\in[0,1]$
and $M\in\N$, for the range $0\leq \alpha \leq \frac{1}{2}$%
---see \cref{fig:FMplots}.

\begin{figure}[ht]
\captionsetup[subfigure]{font=scriptsize,labelfont=scriptsize}
\centering
\begin{subfigure}{0.3\textwidth}
    \includegraphics[width=\textwidth]{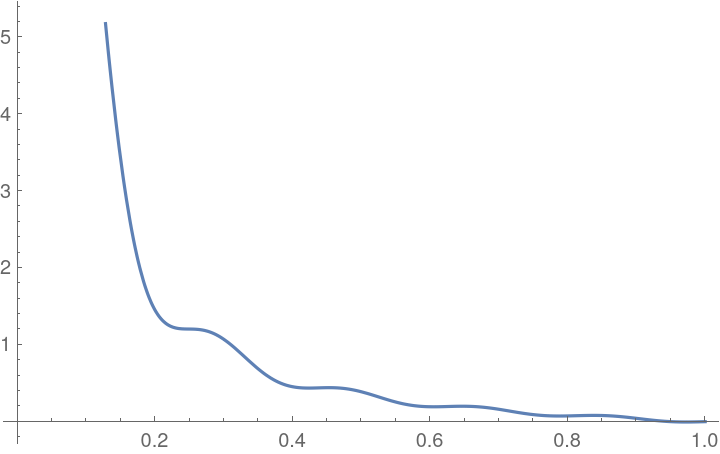}
    \caption{Plot of $K_{10}^{1/2}(x)$.}
    \label{fig:F10}
\end{subfigure}
\hfill
\begin{subfigure}{0.3\textwidth}
    \includegraphics[width=\textwidth]{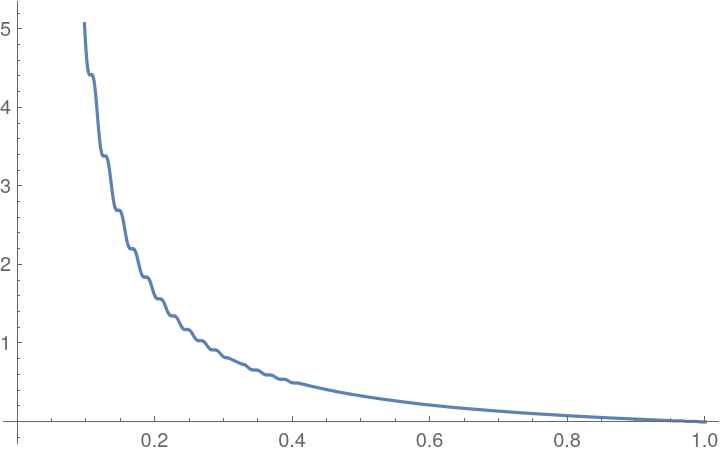}
    \caption{Plot of $K_{100}^{1/2}(x)$.}
    \label{fig:F100}
\end{subfigure}
\hfill
\begin{subfigure}{0.3\textwidth}
    \includegraphics[width=\textwidth]{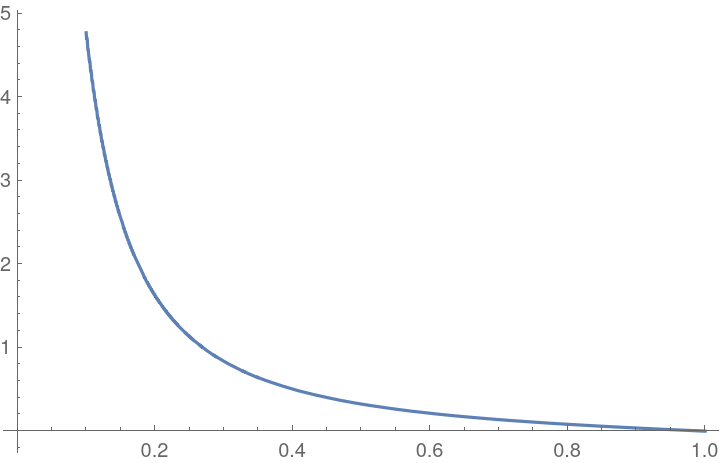}
    \caption{Plot of $K_{1000}^{1/2}(x)$.}
    \label{fig:F1000}
\end{subfigure}
\caption[Plots of $K^{1/2}_M(x)$]%
{Plots of $K^{1/2}_M(x)$ for $0\leq x \leq 1$ with $M=10$, $M=100$ and $M=1000$,
respectively. (Generated with Mathematica.)}
\label{fig:FMplots}
\end{figure}

We therefore make the following conjecture.
\begin{conj}\label{conj:KMpos}
    For all, $0\leq\alpha\leq\frac{1}{2}$, $M\in\N$ and $x\in [0,1]$, $K^{\alpha}_M(x)\geq 0$.
\end{conj}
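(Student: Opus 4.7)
The plan has three prongs: an algebraic approach seeking an exact positivity identity, an asymptotic approach reducing to classical Fejér sums, and a direct numerical verification for small $M$.

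The algebraic approach would mimic the classical identity $F_N(x) = |D_N(x)|^2/(N+1)$ for the Fejér kernel. The Bessel basis $\{J_0(j_m\cdot)\}_{m\geq 1}$ has a Christoffel--Darboux-type reproducing kernel, and one would hope to massage $K_M^\alpha(x)$ into a positive-definite expression (perhaps a sum of squares, or a quadratic form in the partial sums $S_m^\alpha$) via summation-by-parts and identities like \cref{eqn:intJ0rdr}. The presence of the multiplier $j_m^\alpha$ (rather than the natural weight $1/J_1(j_m)^2$ coming from the reproducing-kernel structure) is the main structural obstruction to this approach, and suggests that a clean closed form is unlikely for general $\alpha$.

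Failing a closed-form decomposition, the backup plan is asymptotic. Using \cref{lem:Jasym} and the near-equispacing $j_m = \pi m + O(1)$ afforded by \cref{lem:zeros}, I would write
\begin{equation*}
    K_M^\alpha(x) = \sqrt{\tfrac{2}{\pi}}\,x^{-1/2}\sum_{m=1}^M \left(1 - \tfrac{m}{M+1}\right) j_m^{\alpha-1/2}\cos(j_m x - \pi/4) + E_M(x),
\end{equation*}
where $E_M(x)$ collects the low-frequency contributions where $j_m x < 1$ together with the remainder terms $R(j_m x)$ controlled by \cref{eqn:J0errorlarger}. Positivity of the leading sum would then follow by viewing it as a perturbation of a genuine Fejér-type cosine sum at the equally spaced frequencies $\pi m$ (which is non-negative by the classical identity), with frequency errors of size $O(1)$ hopefully absorbed through the triangular Cesàro weights.

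The principal obstacles lie in three places. The endpoint $\alpha = 1/2$ is the most dangerous, because $j_m^{\alpha-1/2} = 1$ eliminates all $m$-summability in the leading sum and forces one to rely on cancellations in the cosine sum alone, likely requiring the inclusion of higher-order terms in the asymptotic expansion of $J_0$. The region near $x = 0$ is also delicate, since the prefactor $x^{-1/2}$ blows up while the asymptotic expansion is invalid for $j_m x < 1$; one could nevertheless proceed there by invoking the trivial bound $|J_0(j_m x)| \leq 1$, using the positivity of $K_M^\alpha(0) = \sum_m (1 - m/(M+1))j_m^\alpha$ together with continuity to extend positivity to a right neighbourhood of $0$. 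Finally, small values of $M$ (where asymptotic arguments are meaningless) would be handled by direct numerical verification of the finitely many resulting inequalities, in keeping with the evidence in \cref{fig:FMplots}. I expect the endpoint $\alpha = 1/2$ combined with the $x \to 0$ limit to be the main obstacle, since this is exactly where both the asymptotic expansion and the decay in $m$ fail simultaneously.
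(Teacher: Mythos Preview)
The statement you are attempting to prove is \cref{conj:KMpos}, which the paper explicitly leaves as an open conjecture: there is no proof in the paper to compare against. The only evidence offered is numerical (\cref{fig:FMplots}), and the conjecture is then \emph{assumed} in order to deduce \cref{thm:mainthmrootx}. So your proposal is not being measured against a known argument; it is an attack on an open problem.

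Your proposal is also not a proof but a research plan, and you candidly flag the obstacles yourself. It is worth noting that your asymptotic prong is essentially the strategy the paper does carry out---not for \cref{conj:KMpos}, but for the weaker \cref{lem:xFbounds}, which only asserts $x^{\alpha+1}K_M^\alpha(x)\geq -C$. There the paper splits $S_M^\alpha$ exactly as you suggest (cosine main term plus remainder from \cref{lem:Jasym}), relates the cosine sum to an integral via the near-equispacing of the $j_m$, and bounds the remainder. The paper's own commentary in \cref{sec:conc} explains why this falls short of the conjecture: the extra factor $x^{\alpha+1}$ is needed precisely to tame the remainder as $x\to 0^+$, so the method yields only a uniform lower bound, not non-negativity. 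Your plan to handle $x\to 0$ by continuity from $K_M^\alpha(0)>0$ does not by itself control how small the neighbourhood of $0$ is as $M\to\infty$, which is exactly the regime where the asymptotic expansion fails and where the paper's approach loses.

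In short: there is no gap to name because there is no proof---neither in the paper nor in your proposal. Your asymptotic approach retraces the route the paper already took to the weaker \cref{lem:xFbounds}, and the paper itself identifies the $x\to 0$, $\alpha=\tfrac12$ regime you highlight as the reason that route does not reach the conjecture.
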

Assuming this conjecture, we can prove the following strengthening of \cref{thm:mainthm}.
\begin{theorem}\label{thm:mainthmrootx}
    Assume that \cref{conj:KMpos} is true.
    Let $a_m\geq 0$ be a sequence such that $a_m\to 0$ and fix $0\leq \alpha < \frac{1}{2}$.
    Then there is a function $f$ such that $\sqrt{x}f(x)\in L^1$,
    whose Bessel--Fourier coefficients of order $0$, $f_m$, satisfy  \[
        0 \leq a_m \leq \frac{f_m}{j_m^\alpha} \to 0
            \qtext{as} m \to \infty.
    \] Furthermore, $f$ can be chosen so that $f_{2m}=0$ for all $m\in\N$,
    or so that $f_{2m+1} = 0$ for all $m\in\N$.
\end{theorem}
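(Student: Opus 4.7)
The plan is to adapt the proof of \cref{thm:mainthm} with essentially only one substantive change: the pointwise lower bound of \cref{lem:xFbounds} is replaced by the sign information $K^\alpha_M \geq 0$ supplied by \cref{conj:KMpos}. I would apply \cref{lem:conv} to obtain a decreasing convex majorant $(c_m)_{m\geq 0}$ with $c_m\geq a_m$ and $c_m\to 0$, and define $f$ by exactly the same formula \eqref{eqn:fdefn}, with the analogous even/odd variants built from $K^{\alpha,e}_M$ and its odd counterpart. Once $\sqrt{x}f\in L^1$ is established, the identification $f_m = j_m^\alpha c_m$ (and its even/odd refinements) transfers verbatim from the proof of \cref{thm:mainthm}, since it depends only on the orthogonality of the Bessel system and on \cref{lem:telescope}, not on the integrability class of $f$.

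So the whole content of the theorem reduces to bounding $\norml{\sqrt{x}K^\alpha_M}{1}{[0,1]}$ uniformly in $M$. Under \cref{conj:KMpos}, positivity lets us drop the absolute value, expand the kernel, and substitute $y = j_m x$ to obtain
\begin{equation*}
    \norml{\sqrt{x}K^\alpha_M}{1}{[0,1]}
        = \sum_{m=1}^{M}\left(1-\frac{m}{M+1}\right)\frac{j_m^\alpha}{j_m^{3/2}}
            \int_0^{j_m}J_0(y)\sqrt{y}\dm{y}.
\end{equation*}
Since $j_m > m$ by \eqref{eqn:jmm} and $\sum_m m^{\alpha-3/2}<\infty$ for $\alpha<1/2$, it will suffice to prove
\begin{equation*}
    I_m := \int_0^{j_m}J_0(y)\sqrt{y}\dm{y} = O(1) \qtext{uniformly in} m\in\N.
\end{equation*}
I expect this $O(1)$ estimate to be essentially the only real obstacle. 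Note that the pointwise error in \cref{lem:Jasym} gives only $|R(y)|\lesssim y^{-3/2}$, which yields a merely logarithmically divergent bound for $I_m$ unless one exploits oscillation, so the naive route via \cref{lem:Jasym} does not suffice.

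To prove $I_m = O(1)$ cleanly, I would avoid the asymptotic expansion altogether and instead integrate by parts using the identity $(yJ_1(y))'=yJ_0(y)$ which underlies \eqref{eqn:intJ0rdr}. Writing $J_0(y)\sqrt{y}=y^{-1/2}\cdot yJ_0(y)$ and integrating by parts yields
\begin{equation*}
    I_m = \sqrt{j_m}\,J_1(j_m) + \tfrac{1}{2}\int_0^{j_m}\frac{J_1(y)}{\sqrt{y}}\dm{y}.
\end{equation*}
The boundary term is $O(1)$ by \eqref{eqn:jmJ1jmasym}, and the remaining integral splits at $y=1$: on $[0,1]$ it is finite because $J_1(y)\sim y/2$ near $0$; on $[1,j_m]$ a second integration by parts using $J_1=-J_0'$ from \eqref{eqn:J0prime} converts it into $J_0(1)-\tfrac{1}{2}\int_1^{j_m}J_0(y)y^{-3/2}\dm{y}$, which is absolutely convergent thanks to the bound $|J_0(y)|\lesssim y^{-1/2}$ from \cref{lem:Jasym}. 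Feeding $I_m=O(1)$ back into the displayed sum gives $\norml{\sqrt{x}K^\alpha_M}{1}{[0,1]}=O(1)$ uniformly in $M$, and \cref{lem:telescope} applied exactly as in the proof of \cref{thm:mainthm} now produces $\sqrt{x}f\in L^1$ to complete the argument.
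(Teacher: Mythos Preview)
Your proposal is correct and follows the same overall architecture as the paper: same convex majorant via \cref{lem:conv}, same definition \eqref{eqn:fdefn}, same reduction (using \cref{conj:KMpos} to drop the absolute value) to a uniform bound on $\int_0^1 K^\alpha_M(x)\sqrt{x}\dm{x}$, same telescoping via \cref{lem:telescope}, and the coefficient computation carried over verbatim from \cref{thm:mainthm}.

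The one genuine difference is how you handle $I_m=\int_0^{j_m}J_0(y)\sqrt{y}\dm{y}$. The paper does \emph{not} prove $I_m=O(1)$; it simply inserts the asymptotic \eqref{eqn:J0asym} on $[1,j_m]$, obtaining a bounded cosine integral plus an error $\int_1^{j_m}O(1/y)\dm{y}=O(\log j_m)$, and observes that $j_m^{\alpha-3/2}\log j_m$ is still summable for $\alpha<\tfrac12$. So your remark that ``the naive route via \cref{lem:Jasym} does not suffice'' is a slight overstatement: it does not yield your intermediate target $I_m=O(1)$, but it \emph{does} suffice for the theorem. Your integration-by-parts argument using $(yJ_1)'=yJ_0$ and $J_1=-J_0'$ is a genuinely different and somewhat cleaner route, giving the sharper bound $I_m=O(1)$ with no logarithmic loss; it also relies only on identities already recorded in \cref{sec:bg}. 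Either approach closes the proof.
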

\begin{proof}
    Construct $f$ as before, using \cref{eqn:fdefn}.
    We now claim (assuming \cref{conj:KMpos}) that $\sqrt{x}f(x) \in L^1$.

    By assumption (i.e.~\cref{conj:KMpos}), $K^\alpha_M(x)\geq 0$, so \[
        \norml{\sqrt{x}K^\alpha_M}{1}{[0,1]} = \int_0^1 K^\alpha_M(x)\sqrt{x}\dm{x}.
    \]
    Writing $K^\alpha_M$ out as a sum and changing variables in the integral again,
    we have
    \begin{align}
        \int_0^1 K^\alpha_M(x)\sqrt{x}\dm{x}
            &= \sum_{m=1}^M\left(1-\frac{m}{M+1}\right)j_m^\alpha\int_0^1 J_0(j_m x)\sqrt{x}\dm{x}\nonumber\\
            &= \sum_{m=1}^M\left(1-\frac{m}{M+1}\right)\frac{j_m^\alpha}{j_m^{3/2}}
                    \int_0^{j_m}J_0(y)\sqrt{y}\dm{y}. \label{eqn:introotxKM}
    \end{align}
    We now split the range of integration into two pieces.
    When $0\leq y \leq 1$, we have $J_0(y)\sqrt{y} = O(1)$.
    When $1\leq y \leq j_m$, we can use the asymptotics for $J_0$ in \cref{lem:Jasym}.
    Hence,
    \begin{align}
        \frac{j_m^\alpha}{j_m^{3/2}}\int_0^{j_m}J_0(y)\sqrt{y}\dm{y} &= \frac{j_m^\alpha}{j_m^{3/2}}\int_0^1 J_0(y)\sqrt{y}\dm{y}\label{eqn:int01}\\
            &\quad+ \frac{j_m^\alpha}{j_m^{3/2}}\int_{1}^{j_m}\sqrt{\frac{2}{\pi}}\cos(y-\pi/4)\dm{y}\label{eqn:int1jmcos}\\
            &\quad+ \frac{j_m^\alpha}{j_m^{3/2}}\int_1^{j_m} O\left(\frac{1}{y}\right)\dm{y}.\label{eqn:int1jmo}
    \end{align} Now, using the asymptotics for $j_m$ in \cref{lem:zeros}, we have\[
        (\ref{eqn:int01}) \lesssim \frac{1}{m^{3/2-\alpha}}, \quad (\ref{eqn:int1jmcos}) \lesssim \frac{1}{m^{3/2-\alpha}}
        \qand (\ref{eqn:int1jmo}) \lesssim \frac{\log{m}}{m^{3/2-\alpha}} \lesssim \frac{1}{m^{3/2-\alpha/2}},
    \] since $m^{-\alpha/2}\log{m}\to 0$.
     For $\alpha<\frac{1}{2}$, all of the above terms are summable.
     Combining and summing these terms in \cref{eqn:introotxKM}, we deduce that the norms
     $\norml{\sqrt{x}K^\alpha_M}{1}{[0,1]}$ are bounded uniformly in $M$.
     Therefore, from the definition of $f$ in (\ref{eqn:fdefn})
     and \cref{lem:telescope} we see that \[
        \norml{\sqrt{x}f}{1}{[0,1]} \lesssim
            \sum_{l=0}^{\infty}(l+1)(c_{l}+c_{l+2}-2c_{l+1}) = c_0 < \infty, 
     \] completing the proof of our claim.

     The computation of the $f_m$, including the modifications
     to ensure that $f_{2m+1}=0$ for all $m\in\N$,
     are identical to those in the proof of \cref{thm:mainthm}.
\end{proof}
\begin{remark}
    We have reason to suspect that, if $\alpha=\frac{1}{2}$,
    then we can prove a similar result, with $xf(x)\in L^1$.
     
     In this case, the terms obtained from the decomposition in \cref{eqn:int01,eqn:int1jmo,eqn:int1jmcos}
     are no longer summable.
     However, if we continue to assume that $K_M^{1/2}(x)\geq 0$, then we can make use of \cref{eqn:intJ0rdr} to compute
      \begin{align*}
        \int_0^1 K^{1/2}_M(x)x\dm{x}
            &= \sum_{m=1}^M\left(1-\frac{m}{M+1}\right)\frac{\sqrt{j_m}}{j_m^2}
                    \int_0^{j_m}J_0(y)y\dm{y}\nonumber\\
            &= \sum_{m=1}^M\left(1-\frac{m}{M+1}\right)\frac{\sqrt{j_m}}{j_m^{2}}j_mJ_1(j_m)\\
            &= \sum_{m=1}^M\left(1-\frac{m}{M+1}\right)\frac{1}{\sqrt{j_m}}J_1(j_m).
    \end{align*}
    Numerical computations suggests that the signs of $J_1(j_m)$
    alternate, with $J_1(j_{2m}) < 0$ and $J_1(j_{2m+1}) > 0$.
    On the other hand, we expect that the values of $\abs{J_0'(j_m)}=\abs{J_1(j_m)}$
    form a decreasing sequence as $m\to\infty$ which, by \cref{lem:zeros},
    decays like $O\left(\frac{1}{\sqrt{j_m}}\right)$---see \cref{fig:J1decreasing}.
    Thus,
    \begin{align*}
        \int_0^1 K^{1/2}_M(x)x\dm{x}
            &\approx \sum_{m=1}^M\left(1-\frac{m}{M+1}\right)\frac{(-1)^{m+1}}{j_m}\\
            &= \frac{1}{M+1}\sum_{m=1}^{M}\sum_{n=1}^{m}\frac{(-1)^{m+1}}{j_m},
    \end{align*} which should remain bounded as $M\to\infty$ by the Alternating Series Test.
    It is then clear that, defining $f$ as in \cref{eqn:fdefn} with $\alpha=\frac{1}{2}$
    would yield $xf(x)\in L^1$, and the computation of the coefficients $f_m$ would goe through just as before.

    However, we have not been able to prove that the sequence $\left(\abs{J_1(j_m)}\right)_{m}$
    is decreasing, although it is strongly suggested by \cref{fig:J1decreasing}.

    \begin{figure}
        \centering
        \includegraphics[scale=.3]{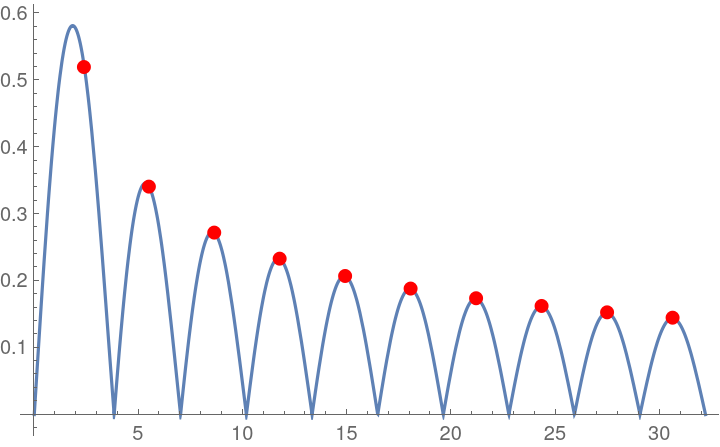}
        \caption{The values of $\abs{J_1(j_m)}$
        appear monotonically decreasing.}
        \label{fig:J1decreasing}
    \end{figure}
\end{remark}
\section{Bounds on $x^{\alpha+1}K^\alpha_M(x)$}\label{sec:KMbounds}

In this section, we prove \cref{lem:xFbounds}.
First we will need another technical lemma.
\begin{lemma}\label{lem:malphaebounds}
    For every $\alpha>0$ and $u>0$ \[
        \max_{m>0} \left\{m^\alpha\e^{-mu}\right\} =  \left(\frac{\alpha}{\e u}\right)^{\alpha} \lesssim u^{-\alpha}.
    \]
\end{lemma}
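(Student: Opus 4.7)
The statement is a one-variable calculus exercise, so I would simply optimize the function $g(m) := m^{\alpha}\e^{-mu}$ over $m>0$ with $u>0$ and $\alpha>0$ treated as parameters. Differentiating gives
\[
    g'(m) = m^{\alpha-1}\e^{-mu}\bigl(\alpha - um\bigr),
\]
which vanishes at the unique critical point $m^{*} = \alpha/u$. Since $g'(m)>0$ for $m<m^{*}$ and $g'(m)<0$ for $m>m^{*}$, this critical point is a global maximum on $(0,\infty)$, and $g\to 0$ at both endpoints, so the supremum is attained.

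Substituting $m^{*}=\alpha/u$ into $g$ gives
\[
    g(m^{*}) = \left(\frac{\alpha}{u}\right)^{\alpha}\e^{-\alpha} = \left(\frac{\alpha}{\e u}\right)^{\alpha},
\]
which is the claimed closed form. The second assertion follows by writing
\[
    \left(\frac{\alpha}{\e u}\right)^{\alpha} = \left(\frac{\alpha}{\e}\right)^{\alpha}u^{-\alpha} \lesssim u^{-\alpha},
\]
where the implied constant $(\alpha/\e)^{\alpha}$ depends only on the fixed parameter $\alpha$, consistent with the paper's use of $\lesssim$ when the only free variable is $u$.

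There is no real obstacle here; the only thing to be a bit careful about is stating that the maximum is genuinely attained (rather than just a supremum), which is immediate from $g$ being continuous, strictly positive on $(0,\infty)$, and tending to $0$ at both $0$ and $\infty$, together with the single critical point. I would keep the proof to two or three lines, presenting it essentially as the calculation above.
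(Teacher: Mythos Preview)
Your proof is correct and follows essentially the same route as the paper: both differentiate $m\mapsto m^{\alpha}\e^{-mu}$, locate the unique critical point at $m=\alpha/u$, verify it is a maximum (you via the sign change of the first derivative, the paper via the second derivative), and substitute to obtain $(\alpha/(\e u))^{\alpha}$. The final $\lesssim u^{-\alpha}$ step is identical.
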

\begin{proof}
    Let $h(y) := y^\alpha \e^{-yu}$ and differentiate twice to find \[
        h'(y) =  \alpha y^{\alpha-1}\e^{-yu} + - uy^{\alpha}\e^{-yu},
    \] and \[
        h''(y) = \left[\alpha(\alpha-1)y^{\alpha-2}-2\alpha uy^{\alpha-1}+u^2y^\alpha\right]\e^{-uy}.
    \] It is then easy to see that the only solution to $h'(y)=0$ is $y=\frac{\alpha}{u}$
    and that \[
        h''\left(\frac{\alpha}{u}\right) = -\frac{\alpha^{\alpha-1}}{u^{\alpha-2}}\e^{-\alpha} < 0,
    \] so that $\max_{y>0}h(y) = h(\alpha/u) =\left(\frac{\alpha}{\e u}\right)^{\alpha}$.
\end{proof}

On to the proof of \cref{lem:xFbounds}.

To begin, note that it is sufficient to prove that $x^{\alpha+1}S^{\alpha}_M(x) \geq -C$ uniformly in $x$ and $M$.
    Using the asymptotics from \cref{eqn:J0asym}, we have that
    \begin{align}
        x^{\alpha+1}S^{\alpha}_M(x) &= x^{\alpha+1}\sum_{m=1}^{M}j_m^{\alpha}J_0(j_m x)\nonumber\\
                &= \sqrt{\frac{2}{\pi}}x^{\alpha+1/2}\sum_{m=1}^M\frac{\cos(j_m x-\pi/4)}{j_m^{1/2-\alpha}}
                 + \sum_{m=1}^{M}x^{\alpha+1}j_m^{\alpha}R(j_m x)\nonumber\\
                &\equiv \sqrt{\frac{2}{\pi}}x^{\alpha+1/2}\tilde{S}^{\alpha}_M(x) + R^{\alpha}_M(x)\label{eqn:xSMasymp},
    \end{align} where we have defined \[
        \tilde{S}^{\alpha}_M := \sum_{m=1}^M \frac{\cos(j_m x -\pi/4)}{j_m^{1/2-\alpha}} \qand
        R^{\alpha}_M(x) := \sum_{m=1}^{M}x^{\alpha+1}j_{m}^{\alpha}R(j_m x).
    \]
    It suffices to obtain lower bounds on $x^{\alpha+1/2}\tilde{S}^{\alpha}_M(x)$
    and upper bounds on $\abs{R^{\alpha}_M(x)}$
    that are uniform in $x\in[0,1]$ and $M\in\N$.

    \paragraph{Lower bounds on $\tilde{S}^{\alpha}_M$.} Since $x^{\alpha+1/2}\geq 0$,
    it suffices to obtain lower bounds on the sums $\tilde{S}^{\alpha}_M$.
    To this end, we follow \cite{Zygmund2003}
    and relate the sum to an integral. We claim that
    \begin{equation}\label{eqn:tildeS}
        \tilde{S}^{\alpha}_M(x) = \frac{x/2}{\sin(\pi x/2)}\int_0^{j_M+\frac{\pi}{2}}\frac{\cos(ux-\pi/4)}{u^{1/2-\alpha}}\dm{u}
            + O(1).
    \end{equation}
    To show this, note that, for each $m\in\N$, we have
    \begin{align*}
        \int_{j_m-\frac{\pi}{2}}^{j_m+\frac{\pi}{2}} \cos(ux-\pi/4)\dm{u}
            &= \left[\frac{\sin(ux-\pi/4)}{x}\right]_{j_m-\frac{\pi}{2}}^{j_m+\frac{\pi}{2}}\\
            &= \frac{2\sin(\pi x/2)}{x}\cos(j_m x -\pi/4),
    \end{align*}
    using the sum-to-product formula for a difference of sines \[
        \sin(a) - \sin(b) = 2\sin\left(\frac{a-b}{2}\right)\cos\left(\frac{a+b}{2}\right).
    \] Hence, integrating over all intervals $[j_m-\frac{\pi}{2},j_m+\frac{\pi}{2}]$
    for $1\leq m \leq M$, yields
    \begin{align*}
        \sum_{m=1}^M\int_{j_m-\frac{\pi}{2}}^{j_m+\frac{\pi}{2}}\frac{\cos(ux-\pi/4)}{u^{1/2-\alpha}}\dm{u}
        &= \sum_{m=1}^M\int_{j_m-\frac{\pi}{2}}^{j_m+\frac{\pi}{2}}\cos(ux-\pi/4)\left(\frac{1}{u^{1/2-\alpha}}-\frac{1}{j_m^{1/2-\alpha}}\right)\dm{u}\\
        &\quad+\sum_{m=1}^M\int_{j_m-\frac{\pi}{2}}^{j_m+\frac{\pi}{2}}\frac{\cos(ux-\pi/4)}{j_m^{1/2-\alpha}}\dm{u}\\
        &\leq \sum_{m=1}^{M}
            \sup_{\abs{u-j_m}<\frac{\pi}{2}}\abs{\frac{1}{u^{1/2-\alpha}}-\frac{1}{j_m^{1/2-\alpha}}}\\
        &\quad+\frac{\sin(\pi x/2)}{x/2}\sum_{m=1}^{M}\frac{\cos(j_m x -\pi/4)}{j_m^{1/2-\alpha}}.
    \end{align*} By the mean value theorem, for $\abs{u-j_m}<\frac{\pi}{2}$, \[
        \frac{1}{u^{1/2-\alpha}} - \frac{1}{j_m^{1/2-\alpha}} = \left(\alpha-\frac{1}{2}\right)\frac{j_m-u}{y^{3/2-\alpha}} \qtext{with}
        j_m-\frac{\pi}{2} \leq y \leq j_m+\frac{\pi}{2},
    \] whence we obtain, using \cref{eqn:jmm}, that \[
        \sup_{\abs{u-j_m}<\frac{\pi}{2}}\abs{\frac{1}{u^{1/2-\alpha}}-\frac{1}{j_m^{1/2-\alpha}}}
            \lesssim \frac{1}{j_m^{3/2-\alpha}} \lesssim \frac{1}{m^{3/2-\alpha}}.
    \]
    Using this bound we have that
    \begin{align}
        \sum_{m=1}^M\int_{j_m-\frac{\pi}{2}}^{j_m+\frac{\pi}{2}}\frac{\cos(ux-\pi/4)}{u^{1/2-\alpha}}\dm{u}
        -\frac{\sin(\pi x/2)}{x/2}&\sum_{m=1}^{M}\frac{\cos(j_m x -\pi/4)}{j_m^{1/2-\alpha}}\nonumber\\
        &\lesssim \sum_{m=1}^M\frac{1}{m^{3/2-\alpha}} = O(1) \label{eqn:sumcosint}
    \end{align} since $\alpha<1/2$.

    The next step is to relate \[
        \sum_{m=1}^M\int_{j_m-\frac{\pi}{2}}^{j_m+\frac{\pi}{2}}
            \frac{\cos(ux-\pi/4)}{u^{1/2-\alpha}}\dm{u}
        \qtext{to}
        \int_{0}^{j_M+\frac{\pi}{2}}\frac{\cos(ux-\pi/4)}{u^{1/2-\alpha}}\dm{u}.
    \] For this we need to recall the asymptotics for $j_m$: \[
        j_m = \pi m -\frac{\pi}{4} + \epsilon_m,
    \] where (according to McMahon’s asymptotic expansions for large zeros) \[
        \epsilon_m = \frac{1}{8\pi m - 2\pi} + O\left(\frac{1}{m^3}\right)
        \qtext{as} m\to\infty.
    \] (See the formula on p.~505 of \citealt{Watson1995}.)
    
    Thus, if we sum over all the intervals $[j_m-\frac{\pi}{2},j_m+\frac{\pi}{2}]$
    for $1\leq m \leq M$, we will obtain the intergral over
    $[j_1-\frac{\pi}{2},j_M+\frac{\pi}{2}]$,
    plus a sum of the errors obtained from the overlaps that occur around
    the midpoints \[
        \frac{j_{m+1}-j_m}{2} = \frac{\pi}{2} +\frac{\epsilon_{m+1}-\epsilon_m}{2}
        = \frac{\pi}{2} + O\left(\frac{1}{m^2}\right).
    \] These errors are therefore of order \[
        \int_{\frac{\pi}{2}-\frac{1}{m^2}}^{\frac{\pi}{2}+\frac{1}{m^2}}
            \frac{1}{u^{1/2-\alpha}}\dm{u} \lesssim \frac{1}{m^2},
    \] and so are summable in $m$. It follows that \[
        \sum_{m=1}^M\int_{j_m-\frac{\pi}{2}}^{j_m+\frac{\pi}{2}}
            \frac{\cos(ux-\pi/4)}{u^{1/2-\alpha}}\dm{u}
        = \int_{0}^{j_M+\frac{\pi}{2}}\frac{\cos(ux-\pi/4)}{u^{1/2-\alpha}}\dm{u} + O(1).
    \] As $(x/2)/\sin(\pi x/2) > 0$ is bounded above and below on $[0,1]$,
    \cref{eqn:tildeS} follows from this display and \cref{eqn:sumcosint}.

    Since we are interested in the behaviour of this integral as $M\to\infty$,
    we will replace $j_M+\frac{\pi}{2}$ by $M$. We now claim that
    \begin{equation}\label{eqn:cosint}
        \int_{0}^{M}\frac{\cos(ux-\pi/4)}{u^{1/2-\alpha}}\dm{u} \geq -\frac{C_1}{x^{\alpha+1/2}} \qtext{for some} C_1 > 0.
    \end{equation} After a change of variables and by using the identity \[
        \cos(a-\pi/4) = \frac{1}{\sqrt{2}}\cos(a) + \frac{1}{\sqrt{2}}\sin(a),
    \] we obtain \[
        \int_{0}^{M}\frac{\cos(ux-\pi/4)}{u^{1/2-\alpha}}\dm{u} =
        \frac{1}{x^{\alpha+1/2}\sqrt{2}}\left(\int_0^{Mx}\frac{\cos(v)}{v^{1/2-\alpha}}\dm{v}
        + \int_0^{Mx}\frac{\sin(v)}{v^{1/2-\alpha}}\dm{v}\right).
    \] Thus, it remains to bound the indefinite integrals \[
        G(y) := \int_0^{y}\frac{\cos(v)}{v^{1/2-\alpha}}\dm{v} \qand
        H(y) := \int_0^{y}\frac{\sin(v)}{v^{1/2-\alpha}}\dm{v}.
    \] We deal with the cosine integral, the sine case being easier.
    Since \[
        G'(y) = \frac{\cos(y)}{y^{1/2-\alpha}} \qand
        G''(y) = -\frac{\sin(y)}{y^{1/2-\alpha}}-\left(\\\alpha-\frac{1}{2}\right)\frac{\cos(y)}{y^{\alpha+1/2}},
    \] and $\alpha<1/2$, we find that the local minima of $G$ of occur precisely for
    $y_k =\frac{3\pi}{2}+2\pi k$ (for any $k\in\N_0$).
    These minima occur between consecutive zeros of the cosine.
    Since $v^{\alpha-1/2}$ is decreasing in $v$, the net contribution
    to the integral over this period is positive,
    since the negative part of the $\cos(v)$ is weighted against a smaller value
    of $v^{\alpha-1/2}$---see \cref{fig:cosrootint}.
    \begin{figure}[ht]
        \centering
        \includegraphics[scale=.25]{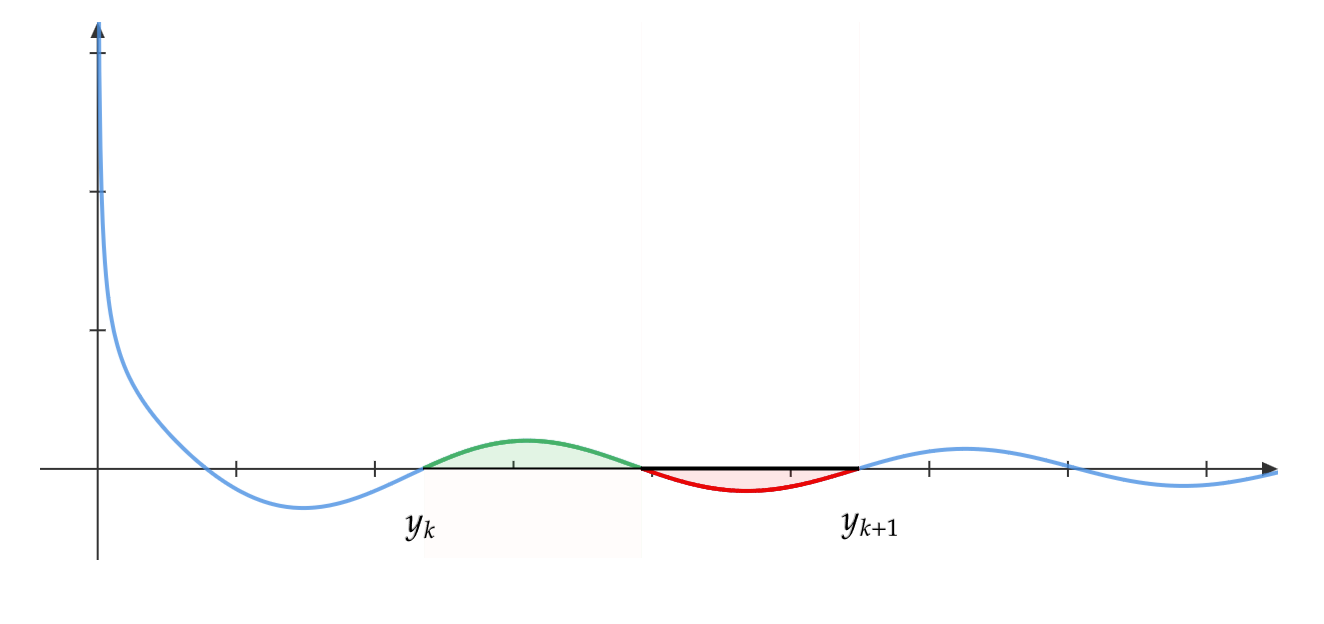}
        \caption[The function $\frac{\cos{v}}{v^{1/2-\alpha}}$]%
        {The function $\frac{\cos{v}}{v^{1/2-\alpha}}$.
        The positive green area outweighs the negative
        red area for a positive net contribution between
        $y_k$ and $y_{k+1}$.}
        \label{fig:cosrootint}
    \end{figure}
    More formally, \begin{align*}
        G(y_{k+1}) - G(y_k) &= 
            \int_{\frac{3\pi}{2}+2\pi k}^{\frac{3\pi}{2}+2\pi k +2\pi}
                \frac{\cos(v)}{v^{1/2-\alpha}}\dm{v}\\
            &= \int_{0}^{2\pi}\frac{\cos(w+3\pi/2+2\pi k)}{(w+3\pi/2+2\pi k)^{1/2-\alpha}}\dm{w}
                \quad(w = v - 3\pi/2-2\pi k)\\
            &= \int_{0}^{2\pi}\frac{\sin(w)}{(w+3\pi/2+2\pi k)^{1/2-\alpha}}\dm{w}
                \quad(\cos(a+3\pi/2)=\sin(a))\\
            &= \int_{0}^{\pi}\frac{\sin(w)}{(w+3\pi/2+2\pi k)^{1/2-\alpha}}\dm{w}\\
                &\qquad+\int_{\pi}^{2\pi}\frac{\sin(w)}{(w+3\pi/2+2\pi k)^{1/2-\alpha}}\dm{w}\\
     G(y_{k+1}) - G(y_k) &\geq \int_{0}^{\pi}\frac{\sin(w)}{(\pi+3\pi/2+2\pi k)^{1/2-\alpha}}\dm{w}\\
                &\qquad+ \int_{\pi}^{2\pi}\frac{\sin(w)}{(\pi+3\pi/2+2\pi k)^{1/2-\alpha}}\dm{w}\\
            &=\frac{1}{(\pi+3\pi/2+2\pi k)^{1/2-\alpha}}\left(
                \int_{0}^{\pi}\sin(w)\dm{w}+\int_{\pi}^{2\pi}\sin(w)\dm{w}\right)\\
            &= 0.
    \end{align*} This shows that the local minima of $G$ are increasing.
    Thus, $G(Mx) \geq G\left(\frac{3\pi}{2}\right)$ uniformly in $M\in\N$ and $x\in[0,1]$,
    and, similarly, $H(Mx)\geq H\left(0\right) =0 $, proving \cref{eqn:cosint}.

    Combining \cref{eqn:tildeS,eqn:cosint} yields
    \begin{equation}\label{eqn:xStildbound}
        x^{\alpha+1/2}\tilde{S}^{\alpha}_M(x) \geq -C_1 - x^{\alpha+1/2}C_0 \geq -C/2
    \end{equation} uniformly in $M\in\N$ and $x\in[0,1]$.
    
    \paragraph{Bounds on $R^{\alpha}_M$.} Recall that $R^{\alpha}_M(x) := \sum_{m=1}^Mx^{\alpha+1}j_m^{\alpha}R(j_m x)$.
    By \cref{eqn:jmm}, $\e^{-j_m xt} \leq \e^{-mxt}$
    for all $x\in[0,1]$ and $t\in(0,\infty)$.
    On the other hand, $j_m \lesssim m$ for all $m\geq 1$.
    Taking $u=xt/2$ in \cref{lem:malphaebounds}, we have \[
        j_m^{\alpha}\e^{-mxt} \lesssim m^{\alpha}\e^{-mxt} = \frac{m^\alpha}{\e^{\frac{mxt}{2}}}\e^{-\frac{mxt}{2}} \lesssim \frac{\e^{-\frac{mxt}{2}}}{(xt)^\alpha}.
    \]
    Hence, from \cref{eqn:J0err} we obtain the following bounds for all $x>0$:
    \begin{align*}
        \abs{R^{\alpha}_M(x)} 
            &\leq x^{\alpha+1}\int_0^{2}\sum_{m=1}^{\infty}m^{\alpha}\e^{-mxt}\sqrt{t}\dm{t}
                + x^{\alpha+1}\int_{2}^{\infty}\sum_{m=1}^{\infty}\frac{m^{\alpha}\e^{-mxt}}{t}\dm{t}\\
            &\lesssim \int_0^{2}\sum_{m=1}^{\infty}\e^{-\frac{mxt}{2}}\frac{x^{\alpha+1}\sqrt{t}}{(xt)^{\alpha}}\dm{t}
                + \int_{2}^{\infty}\sum_{m=1}^{\infty}\e^{-\frac{mxt}{2}}\frac{x^{\alpha+1}}{x^\alpha t^{\alpha+1}}\dm{t}\\
            &= \int_0^{2}\frac{\e^{-xt/2}}{1-\e^{-xt/2}}\frac{x\sqrt{t}}{t^{\alpha}}\dm{t}
                + \int_{2}^{\infty}\frac{\e^{-xt/2}}{1-\e^{-xt/2}}\frac{x}{t^{\alpha+1}}\dm{t}\\
            &= \int_0^{2}\frac{1}{\e^{xt/2}-1}\frac{x\sqrt{t}}{t^{\alpha}}\dm{t}
                + \int_{2}^{\infty}\frac{1}{\e^{xt/2}-1}\frac{x}{t^{\alpha+1}}\dm{t}\\
            &\leq \int_0^{2}\frac{2}{xt}\cdot\frac{x\sqrt{t}}{t^{\alpha}}\dm{t}
                + \int_{2}^{\infty}\frac{2}{xt}\cdot\frac{x}{t^{\alpha+1}}\dm{t}\\
            &\lesssim \int_0^2 \frac{1}{t^{\alpha+1/2}}\dm{t} + \int_2^{\infty} \frac{1}{t^{\alpha+2}}\dm{t}
            \leq C/2.
    \end{align*} uniformly in $x>0$ and $M\in\N$.
    Hence, we can take $x\to 0^+$ and obtain uniform bounds on $\abs{R^{\alpha}_M(x)}$
    for $x\in[0,1]$ and $M\in\N$.
    In conjunction with \cref{eqn:xStildbound}, we deduce the result from \cref{eqn:xSMasymp}.

\begin{remark}
    A slight adaptation of the above argument also shows that the even kernel $K^{\alpha,e}_M(x)$
    is uniformly bounded below.
    In this case we use the integrals
    \[
        \int_{j_{2m}-\pi}^{j_{2m}+\pi} \cos(ux-\pi/4)\dm{u}
            = \frac{2\sin(\pi x)}{x}\cos(j_{2m} x -\pi/4)
    \]
    to obtain the relation \[
        \sum_{m=1}^{M}\frac{\cos(j_{2m}x-\pi/4)}{j_m^{1/2-\alpha}}
            = \frac{x/2}{\sin(\pi x)}\sum_{m=1}^{M}
                \int_{j_{2m}-\pi}^{j_{2m}+\pi}\frac{\cos(ux-\pi/4)}{u^{1/2-\alpha}}\dm{u}
            + O(1).
    \] This time the intervals $[j_{2m}-\pi,j_{2m}+\pi]$ almost partition the larger interval
    $[j_2-\pi, j_{2M}+\pi]$, with errors occurring around the midpoints \[
        \frac{j_{2m+2}-j_{2m}}{2} = \pi + O\left(\frac{1}{m^2}\right),
    \] and so we again have \[
        \sum_{m=1}^{M}\frac{\cos(j_{2m}x-\pi/4)}{j_m^{1/2-\alpha}}
            = \frac{x/2}{\sin(\pi x)}\int_{0}^{j_{2M}+\pi}\frac{\cos(ux-\pi/4)}{u^{1/2-\alpha}}\dm{u}
            + O(1).
    \]

    The remainder in this case takes on the form \[
        \sum_{m=1}^{M} x^{\alpha+1}j_m^{\alpha}R(j_{2m}x),
    \] and can be dealt with in the same way, writing\[
        j_m^{\alpha}\e^{-2mxt} \lesssim m^{\alpha}\e^{-2mxt}= \frac{m^\alpha}{\e^{mxt}}\e^{-mxt}
        \lesssim \frac{\e^{-mxt}}{(xt)^\alpha}.
    \]
\end{remark}
\section{Comments for further research}\label{sec:conc}

We mentioned in the introduction that our \cref{thm:mainthm}
does not quite give us that \cref{thm:RLforBF} is `sharp'.
Our result requires an extra $\sqrt{x}$, loses a factor $\sqrt{j_m}$ in the decay, and leaves the end-point $\alpha=\frac{1}{2}$ open.
We saw in the proof that these `limitations' essentially come from \cref{lem:xFbounds}, since we require higher
powers of $x$ to ensure that the remainder terms in the asymptotics for $J_0$
remain uniformly bounded as $x\to 0^{+}$.

If, on the other hand, we were able to show that $K_M^{\alpha}\geq 0$ directly,
then we obtain the stronger \cref{thm:mainthmrootx}, where we always get $\sqrt{x}f(x)\in L^1$
for any value of $0\leq \alpha < \frac{1}{2}$, and, assuming 
further that $\abs{J_1(j_m)}$ is decreasing,
we can show the end-point case with $xf(x)\in L^1$.

Based on our numerical calculations---see \cref{fig:FMplots}---%
we make the following conjecture.
\begin{conj}\label{conj:decay}
    Given any non-negative sequence $a_m \to 0$,
    there is an $L^1(x\dm{x})$ function $f$
    whose Bessel--Fourier coefficients of order $0$, $f_m$, satisfy \[
        0 \leq a_m \leq \frac{f_m}{\sqrt{j_m}} \to 0 \qtext{as} m \to \infty.
    \]
\end{conj}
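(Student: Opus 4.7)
The plan is to follow the template of \cref{thm:mainthmrootx} with $\alpha=\frac{1}{2}$ and the stronger integrability condition $xf(x)\in L^1$ in place of $\sqrt{x}f(x)\in L^1$. I first apply \cref{lem:conv} to obtain a decreasing convex sequence $c_m\geq a_m$ with $c_m\to 0$, and set
\[
    f(x) := \sum_{l=1}^{\infty}(l+1)(c_{l}+c_{l+2}-2c_{l+1})K_l^{1/2}(x),
\]
exactly as in \cref{eqn:fdefn}. The computation of the Bessel--Fourier coefficients yielding $f_m=\sqrt{j_m}\,c_m\geq a_m$ goes through verbatim, so the whole argument reduces to establishing the single uniform bound $\norml{xK_M^{1/2}}{1}{[0,1]}=O(1)$.

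To obtain this bound, I assume \cref{conj:KMpos} at $\alpha=\frac{1}{2}$, which removes the absolute value from the $L^1$ norm. The identity \cref{eqn:intJ0rdr} then collapses the integral cleanly into
\[
    \int_0^1 K_M^{1/2}(x)\,x\dm{x}
    = \sum_{m=1}^{M}\left(1-\frac{m}{M+1}\right)\frac{J_1(j_m)}{\sqrt{j_m}},
\]
as sketched in the remark following \cref{thm:mainthmrootx}. Notice that the cancellation is exact here because the weight $y$ in the inner integral matches Bessel's equation precisely; this is what makes the endpoint $\alpha=\frac{1}{2}$ accessible in principle, whereas for $\alpha<\frac{1}{2}$ one is forced to pay a factor of $\sqrt{x}$ in the integrability hypothesis.

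The next step is to bound the Cesàro-type sum on the right. Assuming the numerically observed sign pattern $\sgn J_1(j_m)=(-1)^{m+1}$ together with monotone decay of $\abs{J_1(j_m)}$ (\cref{fig:J1decreasing}), the terms $b_m := J_1(j_m)/\sqrt{j_m}$ form an alternating sequence whose moduli decrease (in fact $\abs{b_m}=O(1/m)$ by \cref{eqn:jmJ1jmasym} and \cref{eqn:jmm}). An Abel rearrangement of the weighted sum against the weights $w_m = 1 - m/(M+1)$ gives the clean identity
\[
    \sum_{m=1}^{M} w_m b_m = \frac{1}{M+1}\sum_{k=1}^{M} T_k, \qtext{where} T_k := \sum_{m=1}^{k} b_m,
\]
and each $T_k$ is bounded uniformly in $k$ by the alternating series test. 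Hence $\norml{xK_M^{1/2}}{1}{[0,1]}=O(1)$, and one application of \cref{lem:telescope} to the defining series then yields $\norml{xf}{1}{[0,1]}\lesssim c_0<\infty$, completing the construction.

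The hard part will be the two structural inputs: pointwise positivity of $K_M^{1/2}$ on $[0,1]$, and the strict sign alternation with monotone decay of $\abs{J_1(j_m)}$. Both are strongly supported by numerical evidence but appear to require genuinely new ingredients---perhaps a Sonine-type closed form for $K_M^{1/2}$ analogous to the Fejér identity \cref{eqn:fejer}, or a Sturm-comparison argument at consecutive zeros of $J_0$ to control the oscillation and amplitude of $J_0'=-J_1$. Short of either, one might hope to relax the positivity hypothesis to a one-sided bound $K_M^{1/2}(x)\geq -C/x$ of the same flavour as \cref{lem:xFbounds}, which would still suffice to estimate $\norml{xK_M^{1/2}}{1}{[0,1]}$ if the negative part can be controlled on a neighbourhood of the origin.
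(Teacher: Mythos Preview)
The statement you are addressing is a \emph{conjecture} in the paper, not a theorem; the paper offers no proof. What the paper does provide is the Remark following \cref{thm:mainthmrootx}, which sketches exactly the conditional argument you give: assume \cref{conj:KMpos} at $\alpha=\tfrac12$, use \cref{eqn:intJ0rdr} to reduce $\int_0^1 K_M^{1/2}(x)\,x\dm{x}$ to the Ces\`aro-weighted sum $\sum_{m=1}^{M}(1-\tfrac{m}{M+1})J_1(j_m)/\sqrt{j_m}$, and then invoke the (numerically observed but unproved) sign alternation and monotone decay of $\abs{J_1(j_m)}$ to bound the partial sums by the alternating series test. Your Abel rearrangement is a slightly cleaner packaging of the paper's double-sum $\frac{1}{M+1}\sum_{m=1}^{M}\sum_{n=1}^{m}(-1)^{n+1}/j_n$, but the content is identical.

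So your proposal is not a proof but a faithful reproduction of the paper's own heuristic outline, and you are candid that the two structural inputs---pointwise positivity of $K_M^{1/2}$ and strict monotonicity of $\abs{J_1(j_m)}$---remain open. There is no gap in your reasoning \emph{given} those inputs; the gap is precisely that neither input is established, which is why the paper records the statement as \cref{conj:decay} rather than as a theorem. Your closing suggestions (a Sonine-type closed form, a Sturm comparison, or a one-sided bound $K_M^{1/2}(x)\geq -C/x$) go slightly beyond what the paper says and are reasonable directions, though note that a bound of the form $-C/x$ alone would not immediately control $\int_0^1\abs{xK_M^{1/2}(x)}\dm{x}$ without additional information near the origin.
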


In order to prove the existence of an $L^1$ function on the disc,
whose 2-dimensional Bessel--Fourier series is not summable,
we need to make an additional assumption.
\begin{conj}\label{conj:evenodd}
    The function in \cref{conj:decay} can be chosen such that \[
        f_{2m} = 0 \qand f_{2m+1} \geq \sqrt{j_m}a_m \qfa m\in\N,
    \] or with \[
        f_{2m} \geq \sqrt{j_m}a_m \qand f_{2m+1} = 0 \qfa m\in\N.
    \]
\end{conj}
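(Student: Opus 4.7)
The plan is to mirror the passage from \cref{thm:mainthm,thm:mainthmrootx} to their even/odd variants (the construction yielding \cref{eqn:f2ms}), but now in the setting of \cref{conj:decay}. I introduce the even and odd strong kernels
\[
    K^{1/2,e}_M(x) := \sum_{m=1}^{M}\left(1-\frac{m}{M+1}\right)j_m^{1/2}J_0(j_{2m}x), \qquad
    K^{1/2,o}_M(x) := \sum_{m=1}^{M}\left(1-\frac{m}{M+1}\right)j_m^{1/2}J_0(j_{2m+1}x).
\]
By the orthogonality of $J_0(j_n\,\cdot)$ with weight $x\dm{x}$, the Bessel--Fourier coefficients of $K^{1/2,e}_M$ vanish on odd indices and equal $\bigl(1-\tfrac{m/2}{M+1}\bigr)j^{1/2}_{m/2}$ for even $m\leq 2M$, exactly as in the computation of \cref{eqn:f2ms}; the analogous statement holds for $K^{1/2,o}_M$ on odd indices.

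Assuming uniform $L^1(x\dm{x})$-boundedness of these kernels, the rest of the proof proceeds as in \cref{thm:mainthmrootx}. Namely, take $(c_m)$ to be the convex, decreasing majorant of $(a_m)$ supplied by \cref{lem:conv}, and define
\[
    f(x) := \sum_{l=1}^{\infty}(l+1)(c_l+c_{l+2}-2c_{l+1})\,K^{1/2,e}_l(x).
\]
Then \cref{lem:telescope} and the uniform kernel bound give $xf(x)\in L^1$, while the coefficient computation yields $f_{2m+1}=0$ and $f_{2m}=\sqrt{j_m}\,c_m\geq\sqrt{j_m}\,a_m$, with $f_{2m}/\sqrt{j_m}=c_m\to 0$. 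The odd-index statement is symmetric via $K^{1/2,o}_M$, where one obtains $f_{2m}=0$ and $f_{2m+1}=\sqrt{j_m}\,c_m$.

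The main obstacle is establishing the uniform $L^1(x\dm{x})$-bound on $K^{1/2,e}_M$ and $K^{1/2,o}_M$. Following the remark after \cref{thm:mainthmrootx} and using \cref{eqn:intJ0rdr},
\[
    \int_0^1 K^{1/2,e}_M(x)\,x\dm{x}
        = \sum_{m=1}^{M}\left(1-\frac{m}{M+1}\right)\frac{\sqrt{j_m}\,J_1(j_{2m})}{j_{2m}},
\]
with the analogous identity for the odd kernel, so one needs two ingredients: (a) non-negativity of $K^{1/2,e}_M$ (an even/odd analogue of \cref{conj:KMpos}, plausible from the same numerical evidence as in \cref{fig:FMplots}), in order to identify this integral with the full $L^1$ norm; and (b) the conjectured alternation $\operatorname{sgn}J_1(j_{2m})=-1$, $\operatorname{sgn}J_1(j_{2m+1})=+1$ together with the monotone decay of $|J_1(j_m)|$ suggested by \cref{fig:J1decreasing}, so that Abel summation controls the resulting alternating series uniformly in $M$. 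A direct attack on (a) would likely require a Sturm-type comparison for the partial sums, while (b) may be approachable via McMahon's asymptotic expansion combined with derivative estimates for $J_1$; until these are settled, \cref{conj:evenodd} must remain conjectural, which is why it is stated as such.
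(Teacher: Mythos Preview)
This statement is a \emph{conjecture} in the paper, not a theorem; the paper offers no proof, and in fact explains why the natural approach---precisely the one you outline---breaks down at $\alpha=\tfrac12$. You have correctly identified the template (replace $K^\alpha_M$ by $K^{1/2,e}_M$ or $K^{1/2,o}_M$ and rerun the argument), and you rightly conclude that the result must remain conjectural. However, your diagnosis of \emph{where} the obstruction lies is off, and this matters.

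Your step (b) speaks of ``the resulting alternating series'', but for the even kernel the series is \emph{not} alternating: since $\operatorname{sgn}J_1(j_{2m})=-1$ for all $m$, every term of
\[
\int_0^1 K^{1/2,e}_M(x)\,x\dm{x}
   = \sum_{m=1}^{M}\left(1-\frac{m}{M+1}\right)\frac{\sqrt{j_m}\,J_1(j_{2m})}{j_{2m}}
\]
has the same (negative) sign. Using $j_m\approx\pi m$ and $\abs{J_1(j_{2m})}\approx j_{2m}^{-1/2}$ gives summands of size $\approx c/m$, so the Ces\`aro sum behaves like $-c\log M$ and diverges. Two consequences follow immediately: first, $K^{1/2,e}_M$ cannot be non-negative for large $M$ (its integral against the positive weight $x\dm{x}$ is negative), so your assumption (a) is actually false in the even case; second, $\norml{xK^{1/2,e}_M}{1}{[0,1]}\geq\bigl|\int_0^1 K^{1/2,e}_M(x)\,x\dm{x}\bigr|\to\infty$, so the uniform $L^1$ bound you need fails outright. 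The odd kernel has the mirror problem: all terms are positive, so even if $K^{1/2,o}_M\geq 0$ held, its $L^1(x\dm{x})$ norm would blow up like $\log M$. This is exactly the point the paper makes after stating the conjecture: at $\alpha=\tfrac12$ the boundedness of $\int_0^1 K^{1/2}_M(x)\,x\dm{x}$ relies on the \emph{cancellation} between even and odd indices, and that cancellation is destroyed the moment you restrict to one parity. Any proof of \cref{conj:evenodd} would therefore need a genuinely different construction, not merely sharper estimates for the parity-restricted Fej\'er kernels.
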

This conjecture, while a natural generalisation of \cref{thm:RLforBF} to $\alpha=\frac{1}{2}$, is less certain.
Indeed, when $\alpha<\frac{1}{2}$, we were able to `eliminate' the even (or odd) coefficients
because we had enough powers of $\frac{1}{j_m}$ to ensure that the kernel $K_M^\alpha$
was absolutely summable in $m$ as $M\to\infty$.
But when $\alpha=\frac{1}{2}$, we have to invoke oscillations in order to retain convergence,
and so both even and odd coefficients must make an appearance.
A search for an alternative proof that $K_M^\alpha\geq 0$ might yield a way around this `limitation'.

Assuming both \cref{conj:decay} and \cref{conj:evenodd}, we can prove the following proposition.
\begin{prop}\label{prop:L1div}
    Assume that \cref{conj:decay} and \cref{conj:evenodd} are both true.
    Then, there is a function $f\in L^1(\D)$ such that the partial sums \[
        S_{M,N}(r,\theta) = \sum_{n=-N}^{N}\sum_{m=1}^{M}c_{n,m}(f)
            J_{\abs{n}}(j_{\abs{n},m}r)\e^{\tpi n\theta}
    \] diverge in $L^1(r\dm{r}\dm{\theta})$ for any choice of $M=M_k, N=N_k\to\infty$
    as $k\to\infty$.
\end{prop}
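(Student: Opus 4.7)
The plan is to take $f$ to be a radial function on $\D$ constructed via \cref{conj:decay} and \cref{conj:evenodd}. Since every 2D Bessel--Fourier coefficient of a radial function vanishes for $n\neq 0$, we have $S_{M,N}f(r,\theta)=\sum_{m=1}^M f_m J_0(j_m r)$ independently of $N$, so the claim reduces to showing that the 1D radial partial sums $S_M f$ diverge in $L^1(\D)$ as $M\to\infty$.

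The key step is an $L^1$-duality argument against the constant test function $\phi\equiv 1$ (of unit $L^\infty(\D)$-norm). Using \cref{eqn:intJ0rdr} in the form $\int_0^1 J_0(j_m r)\,r\dm{r}=J_1(j_m)/j_m$, we get
\[
    \norml{S_M f}{1}{\D}\;\geq\;\abs{\int_\D S_M f\cdot 1\,r\dm{r}\dm{\theta}}=2\pi\abs{\sum_{m=1}^M\frac{f_m J_1(j_m)}{j_m}}.
\]
Choosing the option $f_{2m+1}=0$ and $f_{2m}\geq\sqrt{j_m}\,a_m$ in \cref{conj:evenodd}, only the even-indexed terms survive. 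A short argument based on $J_0'=-J_1$ and the alternating signs of $J_0$ between consecutive (simple) zeros, starting from $J_0(0)=1>0$, shows $J_1(j_{2k})<0$ for all $k\geq 1$, so there is no cancellation in the surviving sum. Combined with the standard Bessel asymptotic $J_1(j_m)\sim(-1)^{m+1}\sqrt{2/(\pi j_m)}$ (a refinement of \cref{eqn:jmJ1jmasym}) and $j_m\approx\pi m$, this yields
\[
    \abs{\sum_{k=1}^{\lfloor M/2\rfloor}\frac{f_{2k}J_1(j_{2k})}{j_{2k}}}\;\gtrsim\;\sum_{k=1}^{\lfloor M/2\rfloor}\frac{a_k}{k}.
\]

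To conclude, choose $a_m$ in \cref{conj:decay} so that $a_m\to 0$ but $\sum_m a_m/m=\infty$, \eg\ $a_m=1/\log m$ for $m\geq 2$. The right-hand side above then diverges as $M\to\infty$, giving $\norml{S_{M_k,N_k}f}{1}{\D}\to\infty$ for every $M_k,N_k\to\infty$, which is the asserted divergence. The main technical point, and the reason the parity condition in \cref{conj:evenodd} is crucial, is the sign alignment of $J_1(j_{2k})$: the full sequence $J_1(j_m)$ alternates sign from $m$ to $m+1$, so without killing every other term the sum $\sum f_m J_1(j_m)/j_m$ could cancel to a bounded quantity and the constant test function would fail to detect divergence.
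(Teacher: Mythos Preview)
Your proof is correct and follows essentially the same approach as the paper: radialise, test against the constant function via \cref{eqn:intJ0rdr}, use the parity condition from \cref{conj:evenodd} to align the signs of $J_1(j_m)$, and take $a_m=1/\log m$ to force divergence. The only difference is that you keep the even-indexed coefficients (exploiting $J_1(j_{2k})<0$) whereas the paper keeps the odd ones (exploiting $J_1(j_{2k+1})>0$); the two choices are symmetric and equally valid.
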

\begin{proof}
    If the conjectures are true, then by taking $a_m = \frac{1}{\log{m}}$,
    we can find a function $g\in L^1([0,1], x\dm{x})$
    whose 1-dimensional Bessel--Fourier coefficients satisfy and \[
        g_{2m} = 0 \qand g_{2m+1} \geq \frac{\sqrt{j_{0,m}}}{\log(m)}.
    \] Set $f(r,\theta) := g(r)$, so that the 2-dimensional Bessel--Fourier series of $f$
    is given by \[
        S_{M,N}f(r,\theta) = \sum_{m=1}^{M}g_m J_0(j_{0,m}r),
    \] and so \[
        \norml{S_{M,N}f}{1}{\D} \geq 
        \abs{\int_0^1 S_{M,N}f(r,\theta)r\dm{r}} =
        \abs{\sum_{m=1}^{M}g_m \int_0^1 J_0(j_{0,m}r)r\dm{r}}.
    \] Using \cref{eqn:intJ0rdr} we have \[
        \int_0^1S_{M,N}f(r,\theta)r\dm{r}
            = \sum_{m=1}^{M}\frac{g_m}{j_{0,m}^2}\int_{0}^{j_{0,m}}J_0(y)y\dm{y}
            = \sum_{m=1}^{M}\frac{g_m}{j_{0,m}}J_1(j_{0,m}).
    \] Now, since $J_1 = -J_0'$, it is not hard to see that the signs of $J_1(j_{0,m})$
    alternate, with $J_1(j_{0,2m}) < 0$ (i.e., $J_0$ is decreasing at $j_{0,2m}$)
    and $J_1(j_{0,2m+1}) > 0$ (i.e., $J_0$ is increasing at $j_{0,2m+1}$).
    Since $g_{2m}=0$ for all $m$, the partial sums are non-negative and therefore,
    using \[
        \frac{\abs{J_1(j_{0,m})}}{\sqrt{j_{0,m}}} \gtrsim \frac{1}{j_{0,m}}
    \]
    from \cref{eqn:jmJ1jmasym}, we obtain \[
        \norml{S_{N,M}f}{1}{\D} = \sum_{m=1}^{M}\frac{g_{2m+1}}{\sqrt{j_{0,2m+1}}}\frac{\abs{J_1(j_{0,2m+1})}}{\sqrt{j_{0,2m+1}}}
        \gtrsim \sum_{m=1}^{M}\frac{1}{m\log(2m+1)} \to \infty,
    \] as required.
\end{proof}

\section*{Acknowledgements}
I am grateful to my advisor, Prof.~James C.~Robinson, for helpful guidance and discussion
while working on this project.

I acknowledge funding and support from EPSRC and The University of Warwick during the 
preparation on this manuscript.

\bibliography{bibliography}

\begin{thebibliography}{}

\bibitem[Acosta~Babb, 2024]{AcostaBabb2024}
Acosta~Babb, R.~L. (2024).
\newblock {\em The $L^p$ convergence of eigenfunction expansions for the
  Laplacian on planar domains}.
\newblock PhD thesis, University of Warwick.

\bibitem[Grafakos, 2014]{GrafakosCFA}
Grafakos, L. (2014).
\newblock {\em Classical Fourier Analysis}.
\newblock Graduate Texts in Mathematics. Springer New York.

\bibitem[Hethcote, 1970]{Hethcote1970}
Hethcote, H.~W. (1970).
\newblock Bounds for zeros of some special functions.
\newblock {\em Proc. Amer. Math. Soc.}, 25:72--74.

\bibitem[Saadi and Daher, 2022]{SaadiDaher2022}
Saadi, F. and Daher, R. (2022).
\newblock Fourier--{B}essel series of {L}ipschitz functions in weighted spaces
  {$L_p([0,1],t^{2\alpha +1}dt)$}.
\newblock {\em Anal. Math. Phys.}, 12(6):Paper No. 137, 13.

\bibitem[Saadi and Daher, 2023]{SaadiDaher2023}
Saadi, F. and Daher, R. (2023).
\newblock Absolutely convergent {F}ourier--{B}essel series and generalized
  {L}ipschitz classes.
\newblock {\em Mediterr. J. Math.}, 20(3):Paper No. 125, 10.

\bibitem[Tolstov, 1976]{Tolstov1976}
Tolstov, G.~P. (1976).
\newblock {\em Fourier Series}.
\newblock Dover.

\bibitem[Watson, 1995]{Watson1995}
Watson, G.~N. (1995).
\newblock {\em A Treatise on the Theory of Bessel Functions}.
\newblock Cambridge Mathematical Library. Cambridge University Press.

\bibitem[Young, 1920]{Young1920}
Young, W.~H. (1920).
\newblock On series of {B}essel functions.
\newblock {\em Proceedings of the London Mathematical Society},
  s2-18(1):163--200.

\bibitem[Zygmund, 2003]{Zygmund2003}
Zygmund, A. (2003).
\newblock {\em Trigonometric Series}.
\newblock Cambridge Mathematical Library. Cambridge University Press, 3
  edition.

\end{thebibliography}

\end{document}